\newtheorem{thm}{Theorem}[section]
\newtheorem{cor}[thm]{Corollary}
\newtheorem{lem}[thm]{Lemma}
\newtheorem{prop}[thm]{Proposition}
\theoremstyle{definition}
\newtheorem{defn}[thm]{Definition}
\newtheorem{eg}[thm]{Example}
\newtheorem{rem}[thm]{Remark}
\numberwithin{equation}{section}
\newcommand{\Spec}{\mathrm{Spec}}
\newcommand{\m}{\frak{m}}
\newcommand{\p}{\frak{p}}
\newcommand{\q}{\frak{q}}
\newcommand{\Ann}{\mathrm{Ann}}
\newcommand{\Max}{\mathrm{Max}}
\newcommand{\rad}{\mathrm{rad}}
\newcommand{\Rad}{\mathrm{Rad}}
\begin{document}

\title[Quasi-prime submodules and ...]{Quasi-prime submodules and developed Zariski topology}%
\author{A. Abbasi}%

\address{Department of Mathematics, Faculty of Science, University of Guilan, P. O. Box 41335-1914, Rasht, Iran.}%
\email{aabbasi@guilan.ac.ir}%
\author{D. Hassanzadeh-lelekaami}%
\address{Department of Mathematics, Faculty of Science, University of Guilan, P. O. Box 41335-1914, Rasht, Iran.}%
\email{dhmath@guilan.ac.ir}%

\subjclass[2000]{13A15, 13C05, 13E05, 13F99, 13C13, 13C99}%
\keywords{arithmetical rings, quasi-prime submodule, quasi-primeful module, quasi-prime-embedding module, developed Zariski topology, top module}%
\date{\today}%
\begin{abstract}
Let $R$ be a commutative ring with nonzero identity and $M$ be an $R$-module. Quasi-prime submodules of $M$ and  the developed  Zariski topology on $q\Spec(M)$ are introduced. We also, investigate the relationship
between the algebraic properties of $M$ and the topological properties
of $q\Spec(M)$. Modules whose developed Zariski topology is respectively $T_0$,
irreducible or Noetherian are studied, and several characterizations of such modules
are given.
\end{abstract}
\maketitle

\section{INTRODUCTION}

Prime submodules of modules were introduced as a generalization of prime ideals of rings by J. Dauns \cite{Dau78} and several algebraists carried out an intensive and systematic study of the
spectrum of prime submodules (e.g. \cite{lu84}, \cite{mm92}, \cite{lu95}, \cite{mms97}, \cite{mms98}, \cite{lu99}, \cite{mms02}, \cite{lu07}). Here,  quasi-prime submodules of $M$ as a generalization of prime submodules are introduced.
We also, investigate the quasi-primeful modules and we apply them to
develop of topological properties of $q\Spec(M)$, where
$q\Spec(M)$ is the set of all quasi-prime submodules of $M$.

The Zariski topology on the spectrum of prime ideals of a ring is one of the main tools in Algebraic Geometry.
In the literature, there are many different
generalizations of the Zariski topology of rings to  modules (see \cite{mms97}, \cite{behI08},  \cite{behII08}, or \cite{lu99}).
In this paper, we are going study the developed Zariski topology as a
  generalization of the Zariski topology considered in \cite{lu99}, to $q\Spec(M)$, where $M$ is an $R$-module. As is well known, the Zariski topology has been defined on the set of all prime submodules of a module. Here, we considered  developed Zariski topology on the set  of all quasi-prime submodules of a module.

Throughout this paper, all rings are commutative with identity and all modules are unital. For a submodule $N$ of an $R$-module $M$, $(N :_R M)$ denotes the ideal $\{r\in R
\mid rM\subseteq N\}$ and annihilator of $M$, denoted by $\Ann_R(M)$,
is the ideal $(\textbf{0}:_R M)$. $M$ is called faithful if $\Ann(M)=(0)$. If there is no ambiguity we write $(N : M)$ (resp. $\Ann(M)$) instead of $(N :_R M)$ (resp. $\Ann_R(M)$).
A proper ideal $I$ of a ring $R$ is said to be \emph{quasi-prime} if for each pair of ideals $A$ and $B$ of $R$, $A \cap B \subseteq I$
yields either $A\subseteq I$ or $B \subseteq I$ (see \cite{az08}, \cite{Bou70} and \cite{hen02}). It is easy to see that every
 prime ideal is a quasi-prime ideal. Also, every quasi-prime ideal is irreducible (an ideal $I$ of a commutative ring $R$ is said to be \emph{irreducible} if $I$ is not the
intersection of two ideals of $R$ that properly contain it).

A submodule $N$ of an $R$-module $M$ is said to be \emph{prime} if $N\neq M$ and
whenever $rm \in N$ (where $r\in R$ and $m \in M$), then $r\in (N
: M)$ or $m \in N$. If $N$ is prime, then the ideal $\p = (N : M)$
is a prime ideal of $R$. In this circumstances,  $N$ is said to be
\emph{$\p$-prime} (see \cite{lu84}). A submodule $Q$ of an $R$-module $M$ is said to be
\emph{primary} if $Q\neq M$ and if $rm \in Q$, where $r \in R$ and $m
\in M$ implies that either $m \in Q$ or $r \in \q=\sqrt{(Q : M)}$.
If $Q$ is primary, then $(Q : M)$ is a primary ideal of $R$. In this case we say that $Q$ is \emph{$\q$-primary}, where $\q = \sqrt{(Q
: M)}$ is a prime ideal of $R$. The set of all prime submodules
of an $R$-module $M$ is called the \emph{prime spectrum} of $M$ and denoted by
$\Spec(M)$. Similarly, the collection of all $\p$-prime submodules
of an $R$-module $M$  is designated by $\Spec_{\p}(M)$ for any $\p\in \Spec(R)$. We
remark that $\Spec(\textbf{0}) = \emptyset$ and that $\Spec(M)$ may
be empty for some nonzero module $M$. For example, the
$\mathbb{Z}(p^{\infty})$ as a $\mathbb{Z}$-module has no prime
submodule for any prime integer $p$ (see \cite{lu95}). Such a
module is said to be \emph{primeless}.

An $R$-module $M$ is called
\emph{primeful} if either $M =(\textbf{0})$ or $M\neq (\textbf{0})$ and
the map $\Phi : \Spec(M)\rightarrow \Spec(R/\Ann(M))$ defined by
$\Phi(P) = (P : M)/\Ann(M)$ for every $P\in \Spec(M)$, is
surjective (see \cite{lu07}). The set of all maximal submodules
of an $R$-module $M$ is denoted by $\Max(M)$. The \emph{Jacobson radical} $\Rad(M)$ of a
module $M$ is the intersection of all its maximal submodules. $\Rad(M) = M$ when $M$ has no any maximal submodule. By
$N\leq M$ we mean that $N$ is a submodule of $M$. Let $\p$ be a
prime ideal of $R$, and $N\leq M$. By the \emph{saturation of $N$ with
respect to $\p$}, we mean the contraction of $N_{\p}$ in $M$ and
designate it by $S_{\p}(N)$ and we say $N$ is \emph{saturated with
respect to} $\p$ if $S_{\p}(N)=N$ (see \cite{lu03}).

An $R$-module $M$ is called a
\emph{multiplication} module if every submodule $N$ of $M$ is of the
form $IM$ for some ideal $I$ of $R$. For any submodule $N$ of an $R$-module $M$
we define $V^M(N)$ to be the set of all prime submodules of $M$
containing $N$. The \emph{radical} of $N$ defined to be the intersection
of all prime submodules of $M$ containing $N$ and denoted by
$\rad_M(N)$ or briefly $\rad(N)$. $\rad_M(N) = M$ when $M$ has no any prime
submodule containing $N$. In particular, $\rad(\textbf{0}_M)$ is
the intersection of all prime submodules of $M$. If $V^M(N)$ has
at least one minimal member with respect to the inclusion, then every minimal member in this form
is called a \emph{minimal prime submodule of $N$}
or a \emph{prime submodule minimal over $N$}. A minimal prime submodule
of $(\textbf{0})$ is called  \emph{minimal prime submodule of $M$}. A
quasi-prime submodule $N$ of an $R$-module $M$ is called \emph{minimal quasi-prime}
if, for any quasi-prime $K$ of $M$ such that $K\subseteq N$, this
is the case that $K=N$. An $R$-module $M$ is said to be \emph{semiprimitive}
(resp. reduced) if the intersection of all maximal (resp. prime)
submodules of $M$ is equal to zero. A submodule $N$ of an $R$-module $M$ is
said \emph{quasi-semiprime} if it is an intersection of quasi-prime
submodules. We recall that an $R$-module $M$ is \emph{co-semisimple} in case every submodule of $M$ is the intersection of maximal submodules (see \cite[p.122]{ful92}). Every proper submodule of a co-semisimple module is a quasi-semiprime submodule.

In Section $2$, we obtain some properties of quasi-prime submodules.
In this section the relations between quasi-prime submodules of a module $M$ and
quasi-prime submodules of localizations of $M$ are studied.
We also investigate the quasi-primeful modules and we  apply them to
develop topological properties of $q\Spec(M)$.
 We show in Theorem~\ref{mapa} that an $R$-module $M$ is quasi-primeful whenever
$R$ is a $PID$ and $M$ is finitely generated, or $R$ is Laskerian and $M$ is a locally free $R$-module.
We study some main properties of quasi-primeful modules in Proposition \ref{paracor} and also the quasi-prime-embedding modules are studied in Theorem \ref{freet0}. It is shown that an $R$-module $M$ is top in the cases $R$ is a one dimensional Noetherian  domain and either $M$ is weak multiplication  or for every prime ideal $\p\in \Spec(R)$, $|\Spec_{\p}(M)|\leq 1$ and $S_{(0)}(\textbf{0})\subseteq \rad(\textbf{0})$.
In Section $3$, we introduce a topology on the set of
quasi-prime submodules in such a way that the Zariski topology (see
\cite{lu99}) is a subspace of this topology and some concerned properties
 are given. An $R$-module whose developed Zariski topology is $T_0$, irreducible or Noetherian
is studied in Section 3.

\section{SOME PROPERTIES OF QUASI PRIME SUBMODULES}

In this section we introduce the notion of quasi-prime submodule and find some properties of it. We also introduce the notions of quasi-primeful and quasi-prime-embedding modules and we use them in the next section.

\begin{defn}
A proper submodule $N$ of an $R$-module $M$ is called \emph{quasi-prime} if $(N:_R M)$ is a quasi-prime ideal of $R$.
\end{defn}

We define the \emph{quasi-prime spectrum} of an $R$-module $M$ to be the set of all quasi-prime submodules of $M$
 and denote it by $q\Spec^R(M)$. If there is no ambiguity we write only $q\Spec(M)$ instead of $q\Spec^R(M)$.
 For any $I\in q\Spec(R)$, the collection of all quasi-prime submodules $N$ of $M$ with $(N:M)=I$ is designated by
 $q\Spec_I(M)$. We say that $R$ is a \emph{serial ring} if the set of all ideals of $R$ is linearly ordered. Recall that a ring $R$ is
  said to be  \emph{arithmetical},  if for any maximal ideal $\p$ of $R$, $R_{\p}$ is a serial ring (see \cite{Jen66}). Recall that a module $M$ is said to be a \emph{Laskerian module}, if every proper submodule of $M$ has a
primary decomposition. We know that every Noetherian module is  Laskerian.

\begin{rem}\label{6} (See \cite{az08}, \cite{hen02} and \cite{Jen66})
Let $I$ be an ideal in a ring $R$ and S be a multiplicatively closed subset of $R$. Then
\begin{enumerate}
\item If $I$ is quasi-prime, then $I$ is irreducible;
\item If $R$ is a Laskerian ring, then every quasi-prime ideal is a primary ideal;
  \item If $I$ is a prime ideal, then $I$ is quasi-prime;
  \item Every proper ideal of a serial ring is quasi-prime;
  \item If $IR_S$ is a quasi-prime ideal of $R_S$, then $IR_S \cap R$ is a quasi-prime ideal of $R$;
  \item If $I$ is a quasi-prime and primary ideal of $R$ such that $I \cap S =\emptyset$, then $IR_S$ is a quasi-prime ideal of $R_S$;
  \item If $R$ is an arithmetical ring, $I$ is irreducible if and only if $I$ is quasi-prime;
  \item In an arithmetical ring $R$ any primary ideal is irreducible;
  \item If $R$ is a Dedekind domain, then $I$ is quasi-prime if and only if $I$ is a primary ideal.
\end{enumerate}
\end{rem}

\begin{rem}\label{remaux}
Let $M$ be an $R$-module.
\begin{enumerate}
  \item By \cite[Proposition 4]{lu84}, every maximal submodule of an $R$-module $M$ is prime and by Remark~\ref{6}, every prime submodule of $M$ is a quasi-prime submodule. Therefore, $\Max(M)\subseteq \Spec(M)\subseteq  q\Spec(M)$.
      So, $q\Spec(M)\neq\emptyset$ if $M$ is not primeless.
  \item Consider $M=\mathbb{Z}\oplus \mathbb{Z}$ as a $\mathbb{Z}$-module and $N=(2,0)\mathbb{Z}$ is the submodule of $M$ generated by $(2, 0) \in M$. Then $(N:M)=(0)\in \Spec(\mathbb{Z})$, i.e., $N\in  q\Spec(M)$ though $N$ is not a $(0)$-prime submodule of $M$. Thus in general, a quasi-prime submodule need not be a prime submodule, i.e., $\Spec(M)\neq  q\Spec(M)$.
  \item As another example, we consider the faithful torsion $\mathbb{Z}$-module $M=\bigoplus_{p} \mathbb{Z}/{p\mathbb{Z}}$, where $p$ runs through the set of all prime integers. Let $N=(\textbf{0})$ and $\p=(0)$. Then $(N:M)=(\textbf{0}:M)=\Ann(M)=(0)$. Hence, $N\in q\Spec(M)$. However, $N$ is not a prime submodule by \cite[Result 2]{lu03}, because $S_{\p}(N)=S_{(0)}(\textbf{0})=M$.
\end{enumerate}
\end{rem}

An $R$-module $M$ is called a \emph{fully prime module} if every proper submodule is a prime submodule. In \cite[Proposition 1.10]{bkk04}, the authors give several equivalent conditions for an $R$-module $M$ to be fully prime, for example, $M$ is a fully prime $R$-module if and only if $\Ann(M)$ is a maximal ideal, i.e., if and only if $M$ is a homogeneous semisimple module (i.e., a direct sum of isomorphic simple $R$-modules).

\begin{lem}\label{lemaux}\label{lemaux2}
Let $J\in q\Spec(R)$, $\p\in \Spec(R)$, $I$ be a proper ideal of $R$ and $M$ be an $R$-module with submodule $N$. Let $S$ be a multiplicatively closed subset of $R$.
\begin{enumerate}
  \item If $N\in q\Spec_J(M)$, then $(N:M)M\in q\Spec_J(M)$;
  \item If $\{N_\lambda\}_{\lambda\in \Lambda}$ is a family of quasi-prime submodules with $(N_\lambda:M)=J$ for each $\lambda\in \Lambda$, then $\cap_{\lambda\in \Lambda}N_\lambda\in q\Spec_J(M)$;
  \item If $M$ is a fully prime module, then every proper submodule of $M$ is quasi-prime. In particular, every proper subspace of a vector space over a field is quasi-prime;
  \item If $R$ is a serial ring, then every proper submodule of $M$ is quasi-prime;\label{n1}
  \item Let $N$ be a quasi-prime submodule of the $R_S$-module $M_S$. Then $N\cap M$ is a quasi-prime submodule of $M$. So, $\{N\cap M \,|\; N\in q\Spec(M_S)\}\subseteq  q\Spec(M)$;\label{n2}
  \item Let $R$ be Laskerian and $M$ be a finitely generated $R$-module. If $N$ is a quasi-prime submodule of $M$ and $\sqrt{(N:M)}\cap S=\emptyset$, then $N_S$ is a quasi-prime submodule of $M_S$;\label{n3}
  \item Let $R$ be an arithmetical ring. Then every primary submodule of $M$ is quasi-prime;
  \item Let $R$ be an arithmetical ring. If $\p\in V^R(I)$, then $S_{\p}(I)$ is a quasi-prime ideal of $R$. Moreover, if $R$ is Laskerian, then $S_{\p}(I)$ is primary and $\p$ is a minimal prime ideal over $I$;\label{n4}
  \item Let $R$ be an arithmetical ring. Let $N$ be a submodule of $M$ and $\p\in Supp(M/N)$. Then $S_{\p}(N)$ is a quasi-prime submodule of $M$. Therefore, every proper saturated submodule $N$ w.r.t $\p$, is a quasi-prime submodule of $M$;
  \item Let $R$ be an arithmetical ring and let $M$ be a finitely generated $R$-module. If $N$ is a quasi-prime submodule of $M$ and $\p\in V^R(N:M)$, then $N_{\p}$ is a quasi-prime submodule of $M_{\p}$.
\end{enumerate}
\end{lem}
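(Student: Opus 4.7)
The plan is to reduce each assertion about a quasi-prime \emph{submodule} $K$ to the corresponding assertion about the colon ideal $(K:M)$, and then invoke the appropriate clause of Remark~\ref{6}. Three auxiliary identities carry most of the load: $(JM:M)=J$ whenever $J=(N:M)$; $(N\cap M:_R M)$ equals the contraction to $R$ of $(N:_{R_S}M_S)$ for a submodule $N$ of $M_S$; and $(N:_R M)_S=(N_S:_{R_S}M_S)$ when $M$ is finitely generated.

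Parts (1)--(4) are essentially direct. For (1), the chain $J\subseteq(JM:M)\subseteq(N:M)=J$ is immediate from $JM\subseteq N$. For (2), the colon distributes over intersections, so $(\bigcap_\lambda N_\lambda:M)=\bigcap_\lambda(N_\lambda:M)=J$. For (3), a fully prime module has every proper submodule prime, hence quasi-prime by Remark~\ref{remaux}(1); the vector space instance is the special case where $\Ann(V)=(0)$ is the maximal zero ideal of the base field. For (4), if $N$ is proper then $(N:M)$ is a proper ideal of the serial ring $R$, and Remark~\ref{6}(4) applies.

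For the localization parts (5), (6) and (10), the second identity handles (5): it identifies $(N\cap M:M)$ with the contraction of $(N:_{R_S}M_S)$, which is quasi-prime by Remark~\ref{6}(5); one also checks $N\cap M\neq M$, since otherwise $N$ would contain the $R_S$-submodule of $M_S$ generated by $M/1$, namely $M_S$ itself. For (6), the Laskerian hypothesis together with Remark~\ref{6}(2) promotes $(N:M)$ from quasi-prime to primary, after which Remark~\ref{6}(6) applies using $\sqrt{(N:M)}\cap S=\emptyset$, and the third identity transfers this to $(N_S:M_S)$. For (10), since $R$ is arithmetical the localization $R_\p$ is serial; the third identity gives $(N_\p:M_\p)=(N:M)_\p$, a proper ideal of $R_\p$ because $(N:M)\subseteq\p$, so Remark~\ref{6}(4) applies.

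The arithmetical parts (7)--(9) combine Remark~\ref{6}(7),(8) with the serial structure of $R_\p$. For (7), $(N:M)$ is primary, hence irreducible by Remark~\ref{6}(8), and hence quasi-prime by Remark~\ref{6}(7). For (8), write $S_\p(I)=IR_\p\cap R$ where $IR_\p$ is a proper ideal of the serial ring $R_\p$; Remark~\ref{6}(4) makes it quasi-prime and Remark~\ref{6}(5) descends this to $R$, while adding the Laskerian hypothesis invokes Remark~\ref{6}(2) to yield that $S_\p(I)$ is primary, and the minimality claim then reduces to locating $\sqrt{S_\p(I)}$ within the chain of primes of $R_\p$. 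Part (9) is the module analogue: $N_\p$ is proper in $M_\p$ precisely when $\p\in\mathrm{Supp}(M/N)$, so $(N_\p:M_\p)$ is a proper ideal of the serial ring $R_\p$ and hence quasi-prime, and (5) applied with $S=R\setminus\p$ transfers quasi-primality to $S_\p(N)=N_\p\cap M$. The most delicate step I anticipate is the minimality assertion in (8), which turns on pinning down a specific prime in the serial local ring $R_\p$; the remaining verifications are bookkeeping around the three colon identities above.
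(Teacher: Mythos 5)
Your overall strategy---reducing each clause to a statement about the colon ideal and quoting the relevant part of Remark~\ref{6}---is exactly the paper's, and your treatments of (1)--(7), (9) and (10) track the paper's proof step for step: the paper dismisses (1)--(3) as clear, uses Remark~\ref{6}(4) for (4), the contraction identity $((N\cap M):_R M)=(N:_{R_S}M_S)\cap R$ together with Remark~\ref{6}(5) for (5), the Laskerian-to-primary promotion plus Remark~\ref{6}(6) for (6), the primary-implies-quasi-prime chain for (7), the seriality of $R_\p$ for (9) and (10). Your three colon identities are precisely the ones the paper relies on.

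The one place you stop short of a proof is the minimality assertion in (8). Saying it ``reduces to locating $\sqrt{S_\p(I)}$ within the chain of primes of $R_\p$'' names the difficulty but does not resolve it; this is the substantive content of that clause, not bookkeeping. The paper's argument stays in $R$ and works with saturations rather than with primes of $R_\p$: given a prime $\q$ with $I\subseteq\q\subseteq\p$, one has $S_\p(I)\subseteq S_\p(\q)=\q$ and $S_\p(\p)=\p$, and since in the Laskerian case $S_\p(I)$ is asserted to be a $\p$-primary ideal, $\p=\sqrt{S_\p(I)}\subseteq\sqrt{\q}=\q\subseteq\p$, forcing $\q=\p$. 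So the missing input is exactly the identification $\sqrt{S_\p(I)}=\p$ --- strictly more than the fact (from Remark~\ref{6}(2)) that $S_\p(I)$ is primary --- and your sketch supplies neither that identification nor a substitute for it. If you fill in that single point, your write-up and the paper's proof coincide.
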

\begin{proof}

\begin{enumerate}
  \item[(1)-(3)] are clear.
  \item[(4)] Every proper ideal of $R$ is quasi-prime by Remark~\ref{6}.

  \item[(5)]  One can obtain that $((N\cap M):_R M)=(N:_{R_S} M_S)\cap R$. Now, let $I:=(N:_{R_S} M_S)\cap R$. Then $IR_S=(N:_{R_S} M_S)$ is a quasi-prime ideal of $R_S$ by assumption. By Remark~\ref{6}, $I$ is a quasi-prime ideal of $R$ so, $N\cap M$ is a quasi-prime submodule of $M$.

  \item[(6)]~By assumption, $(N:_R M)$ is a quasi-prime ideal and since $R$ is Laskerian,  $(N:_R M)$ is primary. By Remark~\ref{6} and \cite[p. 152, Proposition 8]{nor68}, $(N_S:_{R_S} M_S)=(N:_R M)R_S$ is a quasi-prime ideal of $R_S$. So, $N_S$ is a quasi-prime submodule of $M_S$.

  \item[(7)] Let $N$ be a primary submodule of $M$. Then $(N:_R M)$ is a primary ideal of $R$, so is quasi-prime by Remark~\ref{6}. Hence, $N\in q\Spec(M)$.

  \item[(8)] $IR_{\p}$ is a proper ideal of $R_{\p}$. But $R_{\p}$ is a serial ring. Thus by Remark~\ref{6}, $IR_{\p}$ is a quasi-prime ideal of $R_{\p}$ and therefore $S_{\p}(I)=IR_{\p} \cap R$ is quasi-prime  by Remark~\ref{6}. If $R$ is Laskerian, then by Remark~\ref{6}, $S_{\p}(I)$ is primary. Let $\q$ be a prime ideal of $R$ such that $I\subseteq \q \subseteq \p$. Then $$S_\p(I)\subseteq S_\p(\q) \subseteq S_\p(\p).$$ By definition, $S_\p(\q)=\q$ and $S_\p(\p)=\p$. Since $S_\p(I)$ is a $\p$-primary ideal of $R$, we have $$\p=\sqrt{S_\p(I)}\subseteq \sqrt{S_\p(\q)} \subseteq \sqrt{S_\p(\p)}=\p.$$ Therefore, $\q=\p$ and $\p$ is minimal prime ideal over $I$.
  \item[(9)] Since $\p\in Supp(M/N)$, $N_{\p}\neq M_{\p}$. By assumption $R_{\p}$ is a serial ring. By part~(\ref{n1}), $N_{\p}$ is a quasi-prime submodule of $M_\p$. By part~(\ref{n2}), $S_{\p}(N)=N_{\p}\cap M$ is a quasi-prime submodule of $M$. The last assertion follows from \cite[Result 2]{lu03}.

  \item[(10)] We have $(N_\p : M_\p) = (N :M)_\p\subseteq \p R_\p$ and $R_\p$ is a serial ring. So, $N_p$ is a quasi-prime submodule of $M_p$.
\end{enumerate}
\end{proof}

It is shown in \cite[Proposition 2.1]{az03} that $R$ is a field if every proper submodule of $M$ is a prime submodule of $M$ and $S_{(0)}(\textbf{0})\neq M$. In the following, we give an example  that shows it is not the case for any quasi-prime submodule.

\begin{eg}\label{egzpi}
(1)  Every proper submodule of the $\mathbb{Z}$-module $M=\mathbb{Z}(p^\infty)$ is a quasi-prime submodule, in which $p$ is a prime integer. For, $(L:_{\mathbb{Z}}M)=(0)$ where $L$ is a submodule of $M$ (see \cite[p.~3745]{lu95}).

(2) Let $R$ be an integral domain which is not a field and $K$ be the field of quotients of $R$. Then every proper submodule of $K$ is a quasi-prime submodule. Since $xK=K$ for every nonzero element $x\in R$, $(N:K)=(0)$ for every proper submodule $N$ of $K$.
\end{eg}




\begin{thm}\label{thm2.16}
Let $M$ be a finitely generated $R$-module and let $I$ be a primary quasi-prime ideal of $R$. If $S$ is a multiplicatively closed subset of $R$ such that $I\cap S=\emptyset$, then the map $N \mapsto N_S$ is a surjection from $q\Spec_I(M)$ to $q\Spec_{IR_S}(M_S)$.
\end{thm}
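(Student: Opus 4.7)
\medskip
\noindent\textbf{Proof plan.}
The plan is to verify that the assignment $N\mapsto N_S$ is well-defined into $q\Spec_{IR_S}(M_S)$ and then to construct a preimage for every member of $q\Spec_{IR_S}(M_S)$ by contraction.

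For well-definedness, I would take $N\in q\Spec_I(M)$ and compute the colon ideal at the localization. Since $M$ is finitely generated, the standard identity $(N_S:_{R_S}M_S)=(N:_RM)_S$ gives $(N_S:_{R_S}M_S)=IR_S$. The fact that $I$ is primary, quasi-prime, and misses $S$ puts us exactly in the situation of Remark~\ref{6}(6), so $IR_S$ is a quasi-prime ideal of $R_S$; equivalently, $N_S$ is a quasi-prime submodule of $M_S$ with colon ideal $IR_S$. (Note $N_S\neq M_S$, because $IR_S$ is proper by $I\cap S=\emptyset$.) In particular, $N_S\in q\Spec_{IR_S}(M_S)$. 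This is really just Lemma~\ref{lemaux}(\ref{n3}) once one observes that $\sqrt{I}\cap S=\emptyset$ follows from $I\cap S=\emptyset$ when $I$ is primary: an element $s\in S\cap\sqrt{I}$ would have some power $s^n\in S\cap I$.

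For surjectivity, I would take an arbitrary $K\in q\Spec_{IR_S}(M_S)$ and set $N:=K\cap M$, the contraction of $K$ along $M\to M_S$. First I would verify the set-theoretic equality $N_S=K$: the inclusion $N_S\subseteq K$ is automatic from $N\subseteq K$, and conversely, for $k=m/s\in K$, we have $m/1=s\cdot k\in K\cap(M/1)$, hence $m\in N$ and $k=(m/1)/s\in N_S$. Next I would confirm the colon ideal of $N$ is exactly $I$. By Lemma~\ref{lemaux}(\ref{n2}) (or a direct check), $(N:_RM)=(K:_{R_S}M_S)\cap R=IR_S\cap R$. Since $I$ is a $\sqrt{I}$-primary ideal and $S\cap\sqrt{I}=\emptyset$, the contraction of $IR_S$ to $R$ equals $I$; thus $(N:_RM)=I$. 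Because $I$ is proper and quasi-prime by hypothesis, $N$ is a proper quasi-prime submodule of $M$, i.e.\ $N\in q\Spec_I(M)$, and $N$ maps to $K$.

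The main obstacle is verifying $(N:_RM)=I$, which is the only place where the \emph{primary} hypothesis (as opposed to just quasi-prime) really earns its keep: without primarity one could not guarantee that $IR_S\cap R=I$ from $I\cap S=\emptyset$ alone. Everything else reduces to Remark~\ref{6}, Lemma~\ref{lemaux}, and the standard commutation of localization with the colon operation on finitely generated modules.
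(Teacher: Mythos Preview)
Your proof is correct and follows essentially the same approach as the paper: establish well-definedness via $(N_S:_{R_S}M_S)=IR_S$ and Remark~\ref{6}(6), then obtain surjectivity by contracting $K$ to $N=K\cap M$ and using that $I$ primary with $I\cap S=\emptyset$ forces $IR_S\cap R=I$. Your version is in fact slightly more complete than the paper's, since you explicitly verify $N_S=K$ (which the paper leaves implicit) and spell out why $\sqrt{I}\cap S=\emptyset$.
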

\begin{proof}
Let $N\in q\Spec_I(M)$. Since $M$ is finitely generated and $I\cap S=\emptyset$ we have $IR_S=(N:_R M)R_S=(N_S:_{R_S} M_S)\neq R_S$. By Remark~\ref{6}, $IR_S$ is a quasi-prime ideal of $R_S$. Therefore, $N_S$ is a quasi-prime submodule of $M_S$. Let $L$ be a quasi-prime submodule of $M_S$ with $(L:_{R_S}M_S)=IR_S$. By Lemma~\ref{lemaux}(\ref{n2}), $L\cap M$ is a quasi-prime submodule of $M$. Moreover, using that $I$ is primary we have $$I=IR_S\cap R=(L:_{R_S} M_S)\cap R=((L\cap M):_R M).$$ So, $L\cap M$ is a quasi-prime submodule of $M$.
\end{proof}

\begin{cor}
Let $M$ be a finitely generated $R$-module and $\p\in \Spec(R)$.
\begin{enumerate}
  \item Let $I$ be a $\p$-primary quasi-prime ideal of $R$. Then the map $N \mapsto N_\p$ is a surjection from $q\Spec_I(M)$ to $q\Spec_{IR_{\p}}(M_{\p})$.
  \item The map $N \mapsto N_\p$ is a surjection from $q\Spec_{\p}(M)$ to $q\Spec_{\p R_{\p}}(M_{\p})=\Spec_{\p R_{\p}}(M_{\p})$.
  \item Let $N$ be a quasi-prime submodule of $M$ with $(N:M)=\p$. Then $S_{\p}(N)$ is a prime submodule minimal over $N$ and any other $\p$-prime submodule of $M$ containing $N$, must contain $S_{\p}(N)$.
\end{enumerate}
\end{cor}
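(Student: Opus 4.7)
The plan is to handle the three parts in order, obtaining (1) and (2) as essentially direct consequences of Theorem~\ref{thm2.16}, and then deriving (3) from (2) by analyzing the saturation $S_{\p}(N)$. The main obstacle, as explained at the end, will be the transition in part (3) from quasi-primeness to genuine primeness of the submodule $S_{\p}(N)$.

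For (1), I would apply Theorem~\ref{thm2.16} with $S:=R\setminus\p$. Since $I$ is $\p$-primary, $\sqrt{I}=\p$ and hence $I\subseteq\p$, so $I\cap S=\emptyset$; the hypothesis of the theorem is met and the conclusion is immediate. Part (2) is then the special case $I=\p$ of (1): the prime ideal $\p$ is $\p$-primary and is quasi-prime by Remark~\ref{6}. The only additional verification needed for (2) is the equality $q\Spec_{\p R_{\p}}(M_{\p})=\Spec_{\p R_{\p}}(M_{\p})$, which uses that $\p R_{\p}$ is maximal in $R_{\p}$: for any proper $L\leq M_{\p}$ with $(L:M_{\p})=\p R_{\p}$ and any $r\notin\p R_{\p}$, writing $1=sr+x$ with $x\in\p R_{\p}\subseteq (L:M_{\p})$ yields $m\in L$ whenever $rm\in L$, so $L$ is prime.

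For (3), let $N\in q\Spec_{\p}(M)$. By (2), $N_{\p}$ is a $\p R_{\p}$-prime submodule of $M_{\p}$, and the first step is to show its contraction $S_{\p}(N)=N_{\p}\cap M$ is $\p$-prime in $M$. Using the finite generation of $M$, a clearing-denominators argument gives $(S_{\p}(N):M)=\p$, and primeness follows by pushing the relation $rm\in S_{\p}(N)$ into $M_{\p}$ and invoking primeness of $N_{\p}$. Once this is done, \cite[Result~2]{lu03}---which asserts that every $\p$-prime submodule $P$ is saturated with respect to $\p$---implies that any $\p$-prime $P\supseteq N$ satisfies $S_{\p}(N)\subseteq S_{\p}(P)=P$, giving the second clause. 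For the minimality assertion, if a prime $Q$ satisfies $N\subseteq Q\subseteq S_{\p}(N)$ then the squeeze $\p=(N:M)\subseteq(Q:M)\subseteq(S_{\p}(N):M)=\p$ forces $Q$ to be $\p$-prime, whence $Q\supseteq S_{\p}(N)$ and so $Q=S_{\p}(N)$. The essential point throughout (3) is that the maximality of $\p R_{\p}$ in $R_{\p}$, combined with the finite generation of $M$, upgrades $N_{\p}$ from quasi-prime to prime, which is what allows $S_{\p}(N)$ to be a prime submodule of $M$ rather than merely a quasi-prime one.
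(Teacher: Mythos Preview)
Your proposal is correct and follows essentially the same route as the paper. For (1) and (2) the paper simply says ``follows from Theorem~\ref{thm2.16}'' without spelling out the choice $S=R\setminus\p$ or the verification of $q\Spec_{\p R_{\p}}(M_{\p})=\Spec_{\p R_{\p}}(M_{\p})$ that you supply; for (3) the paper reaches the same conclusions but by citing outside results---\cite[Proposition~1]{lu95} for the fact that the contraction $S_{\p}(N)=N_{\p}\cap M$ of a prime submodule is $\p$-prime, and \cite[Result~3]{lu03} for the minimality and containment statements---whereas you argue the primeness of $S_{\p}(N)$ directly and derive minimality from \cite[Result~2]{lu03} together with the colon-ideal squeeze. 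The underlying strategy (use (2) to get $N_{\p}$ prime in $M_{\p}$, contract, then exploit saturation) is identical; your version is simply more self-contained.
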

\begin{proof}
(1) and (2) follows from Theorem~\ref{thm2.16}. For establish (3), note that by part~(2), $N_{\p}$ is a $\p R_{\p}$-prime submodule of $M_{\p}$ and by \cite[Proposition~1]{lu95}, $S_{\p}(N)=N_{\p}\cap M$ is a $\p$-prime submodule of $M$. Now the results follows from \cite[Result~3]{lu03}.
\end{proof}

\begin{defn}
Let $M$ be an $R$-module. For a submodule $N$ of $M$ we define
\begin{eqnarray*}
  D^M(N) &=& \{ L\in  q\Spec(M)\mid (L:M)\supseteq (N:M)\}, \\
  \Omega^M(N) &=& \{L\in  q\Spec(M)\mid L\supseteq N\} .
\end{eqnarray*}
\end{defn}

If there is no ambiguity we write $D(N)$ (resp. $\Omega (N)$) instead of $D^M(N)$ (resp. $\Omega^M(N)$).

\begin{lem}
Let $M$ be an $R$-module with $ q\Spec(M)=\emptyset$. Then $\p M = M$ for every maximal
ideal $\p$ of $R$. On the other hand, if $IM = M$ for every $I\in D^R(\Ann(M))$, then $q\Spec(M)=\emptyset$.
\end{lem}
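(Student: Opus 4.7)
The plan is to prove each direction by contraposition, since in both cases a direct contradiction drops out immediately once we unfold the definitions.

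For the first direction, suppose for contradiction that $\p M \neq M$ for some maximal ideal $\p$ of $R$. Since $\p \subseteq (\p M :_R M)$ and $(\p M :_R M)$ is a proper ideal (because $\p M \neq M$), the maximality of $\p$ forces $(\p M :_R M) = \p$. But $\p$ is a prime ideal, hence quasi-prime by Remark~\ref{6}, so $\p M$ is a quasi-prime submodule of $M$. This contradicts $q\Spec(M) = \emptyset$.

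For the second direction, first observe that when we view $R$ as a module over itself, $(I :_R R) = I$ for every ideal $I$, so $D^R(\Ann(M))$ is simply the set of quasi-prime ideals of $R$ containing $\Ann(M)$. Now suppose for contradiction that $q\Spec(M) \neq \emptyset$ and pick any $N \in q\Spec(M)$. Then $(N:_R M)$ is a quasi-prime ideal of $R$, and since $\Ann(M) = (\textbf{0}:_R M) \subseteq (N:_R M)$, we have $(N:_R M) \in D^R(\Ann(M))$. The hypothesis then yields $(N:_R M)M = M$. However, by the very definition of the colon ideal, $(N:_R M)M \subseteq N$, so $N = M$, contradicting the fact that quasi-prime submodules are proper.

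Neither direction presents a real obstacle; the only subtlety worth flagging is the identification $D^R(\Ann(M)) = \{L \in q\Spec(R) \mid L \supseteq \Ann(M)\}$, which is what makes the containment $\Ann(M) \subseteq (N:_R M)$ relevant for applying the hypothesis in the converse direction.
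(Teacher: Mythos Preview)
Your proof is correct. The paper actually states this lemma without supplying a proof, so there is nothing to compare against; your argument is the natural one and fills the gap cleanly. The only point that deserved explicit mention---and you handled it---is the unwinding of $D^R(\Ann(M))$ as the set of quasi-prime ideals of $R$ containing $\Ann(M)$, which is exactly what is needed to see that $(N:_R M)$ lies in this set whenever $N\in q\Spec(M)$.
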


\begin{defn}
When $ q\Spec(M) \neq\emptyset$, the map $\psi :  q\Spec(M)\rightarrow
 q\Spec(R/\Ann(M))$ defined by $ \psi(L) = (L : M)/\Ann(M)$ for every
$L\in  q\Spec(M)$, will be called the natural map of $ q\Spec(M)$. An $R$-module $M$ is called \emph{quasi-primeful} if either $M =(\textbf{0})$ or $M\neq (\textbf{0})$ and has a surjective natural map.
\end{defn}

\begin{eg}\label{paralleleg}
Let $\Sigma:=q\Spec(\mathbb{Z})\setminus \{(0)\}$. Consider
the $\mathbb{Z}$-module $M=\bigoplus_{I\in \Sigma}
\mathbb{Z}/{I}$. We will show that $M$ is  a quasi-primeful
$\mathbb{Z}$-module. Note that $(\textbf{0}:M)=\Ann(M)=(0)$. So,
$(\textbf{0})\in q\Spec_{(0)}(M)$. On the other hand, for each
nonzero quasi-prime ideal $I$ of $\mathbb{Z}$, we have
$(IM:M)=I\in q\Spec(\mathbb{Z})$. This implies that $IM\in
q\Spec_I(M)$. We conclude that $M$ is  a quasi-primeful
$\mathbb{Z}$-module.
\end{eg}

Let $Y$ be a subset of $ q\Spec(M)$ for an $R$-module $M$. We will
denote the intersection of all elements in $Y$ by $\Im(Y )$.

\begin{prop}\label{free}
Let $F$ be a free $R$-module and $I$ be a quasi-prime ideal of $R$. Then
\begin{enumerate}
  \item $IF$ is a quasi-prime submodule, i.e., $F$ is quasi-primeful;
  \item $IF=\Im(q\Spec_I(F))$;
  \item If $F$ has primary decomposition for submodules, then $I$ is primary.
\end{enumerate}
\end{prop}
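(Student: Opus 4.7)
The plan is to prove the three assertions in sequence, with the central observation being the identity $(IF :_R F) = I$ for a free $R$-module $F$ and any ideal $I$ of $R$. To establish this, I would fix a basis $\{e_\lambda\}_{\lambda \in \Lambda}$ of $F$ (nonempty, assuming $F \neq 0$; the zero case is trivial) and observe that $F/IF \cong (R/I)^{(\Lambda)}$, whence $(IF:F) = \Ann(F/IF) = \Ann(R/I) = I$. Granting this, (1) follows at once: $(IF:F) = I$ is quasi-prime by hypothesis, so $IF \in q\Spec_I(F)$, making $IF$ a quasi-prime submodule. For the quasi-primeful claim, note that $\Ann(F) = 0$, so the natural map $\psi : q\Spec(F) \to q\Spec(R)$ sends $IF$ to $I$; letting $I$ range over $q\Spec(R)$ gives surjectivity of $\psi$, and hence $F$ is quasi-primeful.

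For (2), both inclusions are formal consequences of (1). Since $IF \in q\Spec_I(F)$, we have $\Im(q\Spec_I(F)) \subseteq IF$. Conversely, every $N \in q\Spec_I(F)$ satisfies $(N:F) = I$, so $IF = (N:F)F \subseteq N$, and intersecting over all such $N$ yields $IF \subseteq \Im(q\Spec_I(F))$.

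For (3), suppose $F$ has primary decompositions and write $IF = Q_1 \cap \cdots \cap Q_n$ with each $Q_j$ a primary submodule of $F$, so that each $(Q_j : F)$ is a primary ideal of $R$. Applying $(-:F)$ gives
\[
I = (IF : F) = \bigcap_{j=1}^n (Q_j : F).
\]
By hypothesis $I$ is quasi-prime, hence irreducible as an ideal by Remark~\ref{6}(1). A straightforward induction on $n$ then forces $I = (Q_{j_0} : F)$ for some $j_0$, and so $I$ itself is a primary ideal.

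I do not anticipate a serious obstacle: the identity $(IF : F) = I$ for free modules is standard, and the only genuine point of care in (3) is invoking irreducibility to collapse a finite intersection of primary ideals down to a single primary ideal. Once these two ingredients are in place, the argument is essentially mechanical.
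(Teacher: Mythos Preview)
Your proof is correct and follows essentially the same approach as the paper: the key identity $(IF:F)=I$ drives all three parts, and in (3) both you and the paper pass to the intersection $I=\bigcap_j (Q_j:F)$ of primary ideals and conclude $I=(Q_{j_0}:F)$ for some $j_0$. The only minor difference is that in (3) the paper invokes the quasi-prime property of $I$ directly (from $\bigcap_j (Q_j:F)\subseteq I$ one $(Q_j:F)\subseteq I$, and the reverse inclusion is automatic), whereas you route through irreducibility via Remark~\ref{6}(1); both arguments are equivalent and equally short.
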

\begin{proof}
(1) Since $F$ is free we have $I=(IF:F)$, so that $IF$ is a quasi-prime submodule. (2) This is clear by (1). For (3), Let $\cap_{i=1}^n Q_i$ be a primary decomposition of $IF$, where each $Q_i$ is a $\p_i$-primary submodule of $F$. Then $I=(IF:F)=\cap_{i=1}^n (Q_i:F)$. Since $I$ is quasi-prime, $I=(Q_j:F)$ for some $1\leq j\leq n$. Hence, $I$ is primary since $Q_j$ is a primary submodule.
\end{proof}

\begin{lem}\label{directsum}
Let $M$, $M_1$, $M_2$ be $R$-modules such that $M = M_1\oplus M_2$ and $I\in D^R(\Ann(M))$. If $N\in q\Spec_I(M_1)$ (resp. $N\in q\Spec_I(M_2)$), then $N\oplus M_2\in q\Spec_I(M)$ (resp.
$M_1\oplus N\in q\Spec_I(M)$). In particular, every direct sum of a finite number of quasi-primeful $R$-modules is quasi-primeful over $R$.
\end{lem}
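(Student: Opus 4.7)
The plan is to first establish the main assertion by a direct computation of the colon ideal $(N\oplus M_2:M)$, and then derive the ``in particular'' clause by induction on the number of summands, exploiting the quasi-primality of any ideal containing the annihilator $\Ann(M)=\Ann(M_1)\cap\Ann(M_2)$.

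First I would observe that for any $r\in R$, the inclusion $r(M_1\oplus M_2)\subseteq N\oplus M_2$ is equivalent to $rM_1\subseteq N$, since $rM_2\subseteq M_2$ is automatic. Hence $(N\oplus M_2:M)=(N:M_1)=I$, which is quasi-prime by hypothesis. Because $N$ is a proper submodule of $M_1$, its direct sum with $M_2$ is a proper submodule of $M$, so $N\oplus M_2\in q\Spec_I(M)$ as claimed; the parenthetical case is symmetric.

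For the ``in particular'' statement I would induct on the number of summands and reduce to the binary case $M=M_1\oplus M_2$ with both $M_i$ quasi-primeful. If $M=\textbf{0}$ the conclusion is immediate. Otherwise assume, without loss of generality, $M_1\neq \textbf{0}$; then $q\Spec(M_1)\neq\emptyset$ and the first part exhibits elements of $q\Spec(M)$, so the natural map $\psi:q\Spec(M)\to q\Spec(R/\Ann(M))$ is defined. Given any $J\in q\Spec(R)$ with $J\supseteq \Ann(M)=\Ann(M_1)\cap\Ann(M_2)$, quasi-primality of $J$ forces $J\supseteq\Ann(M_i)$ for some $i\in\{1,2\}$; as $J$ is proper this implies $M_i\neq \textbf{0}$, whence surjectivity of the natural map of $M_i$ supplies $N\in q\Spec_J(M_i)$, and the first part converts this into an element of $q\Spec_J(M)$ lying over $J/\Ann(M)$. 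Surjectivity of $\psi$ follows.

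The argument contains no real obstacle: the only substantive idea is that the quasi-prime ideal $J$ splits the inclusion $\Ann(M_1)\cap\Ann(M_2)\subseteq J$, which is precisely the defining property of quasi-primality, and the colon and annihilator manipulations are routine.
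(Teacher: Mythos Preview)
Your argument is correct and supplies precisely the routine details that the paper omits (the paper writes only ``This is straightforward and we omit it''). The computation $(N\oplus M_2:M)=(N:M_1)$ together with the use of quasi-primality of $J$ to split the inclusion $\Ann(M_1)\cap\Ann(M_2)\subseteq J$ is exactly the intended line, and your parametrization of the target by ideals $J\in q\Spec(R)$ with $J\supseteq\Ann(M)$ matches the paper's own convention elsewhere (e.g., in the proof of Theorem~\ref{mapa}).
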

\begin{proof}
This is straightforward and we omit it.
\end{proof}

\begin{thm}\label{mapa}
Let $M$ be an $R$-module. Then $M$ is quasi-primeful in each of the following cases:
\begin{enumerate}
  \item $R$ is $PID$ and $M$ is finitely generated;
  \item $R$ is Dedekind domain and $M$ is faithfully flat;
  \item $R$ is Laskerian and $M$ is locally free.
\end{enumerate}
\end{thm}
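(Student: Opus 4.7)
The plan is to verify, in each case, the surjectivity of the natural map $\psi\colon q\Spec(M)\to q\Spec(R/\Ann(M))$; equivalently, to exhibit, for every quasi-prime ideal $I$ of $R$ with $\Ann(M)\subseteq I$, a quasi-prime submodule $L$ of $M$ satisfying $(L:M)=I$. In cases (2) and (3) the natural candidate is $L:=IM$, and the task reduces to establishing $(IM:M)=I$ together with $IM\neq M$; for case (1) I would instead decompose $M$ and appeal to Lemma~\ref{directsum}. The main obstacle is the identity $(IM:M)=I$, for which each case demands a genuinely different technique.

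For case (1), the structure theorem for finitely generated modules over a PID writes $M$ as a finite direct sum of cyclic modules of the forms $R$ and $R/(p^e)$ (with $p$ a prime element of $R$ and $e\geq 1$). By Lemma~\ref{directsum} it is enough to verify each summand is quasi-primeful. The summand $R$ has $\Ann(R)=0$ and its natural map is the identity. For $R/(p^e)$, the quasi-prime ideals of $R$ containing $(p^e)$ are precisely the powers $(p^i)$ with $1\leq i\leq e$, and for each such $i$ the submodule $L:=(p^i)/(p^e)$ is quasi-prime with $(L:R/(p^e))=(p^i)$.

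For case (2), a Dedekind domain is Laskerian, so by Remark~\ref{6}(9) the ideal $I$ is primary, and faithfulness gives $\Ann(M)=0$. Since $M$ is flat, $M/IM\cong R/I\otimes_R M$ and hence $(IM:M)=\Ann_R(R/I\otimes_R M)$. The crucial step is $\Ann_R(R/I\otimes_R M)=\Ann_R(R/I)=I$, which uses faithful flatness: if $r$ annihilates $R/I\otimes_R M$, flatness gives $(rR/I)\otimes_R M=r(R/I\otimes_R M)=0$, whence $rR/I=0$ and $r\in I$. The same faithful flatness forces $IM\neq M$, since $M/IM=0$ would give $R/I\otimes_R M=0$ and then $R/I=0$, contradicting that $I$ is proper.

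For case (3), Remark~\ref{6}(2) makes $I$ primary, and again I take $L:=IM$. At each prime $\p$ with $M_\p\neq 0$, freeness of $M_\p$ over $R_\p$ gives $(I_\p M_\p:_{R_\p} M_\p)=I_\p$ by comparing coefficients against a basis; intersecting over primes $\p\supseteq I$ (all of which lie in $\mathrm{Supp}(M)=V(\Ann(M))$, because $\p\supseteq I\supseteq\Ann(M)$) recovers $(IM:M)=I$. For properness, any maximal ideal $\m\supseteq I$ sits in $\mathrm{Supp}(M)$, so $M_\m$ is a nonzero free $R_\m$-module and $I_\m\subseteq\m R_\m$ is proper, whence $I_\m M_\m\subsetneq M_\m$ by a direct basis argument, giving $IM\neq M$.
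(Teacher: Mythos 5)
Your proposal is correct, and part (1) is essentially the paper's own argument: decompose $M$ into cyclic summands and invoke Lemma~\ref{directsum}. The only difference there is that the paper treats an arbitrary cyclic module $N\cong R/\Ann(N)$ in one line, via $(I/\Ann(N):N)=I$ for every ideal $I\supseteq \Ann(N)$, so it never needs your explicit classification of the quasi-prime ideals over $(p^e)$. Parts (2) and (3), however, take genuinely different routes. In (2) the paper also uses $L=JM$, but it obtains $(JM:M)=J$ by first noting $J$ is primary (Remark~\ref{6}(9)) and then citing \cite[Theorem~3]{lu84} on primary ideals of flat modules; your tensor computation $\Ann_R(R/I\otimes_R M)=I$ is more self-contained and in fact shows that faithful flatness alone yields $(IM:M)=I$ and $IM\neq M$ for every proper ideal $I$, with no primariness and no Dedekind hypothesis. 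In (3) the paper does not use $IM$ itself: it passes to $\p=\sqrt{I}$, uses Remark~\ref{6}(6) to see $IR_\p$ is quasi-prime in $R_\p$, produces via Proposition~\ref{free} a quasi-prime $N\le M_\p$ with $(N:M_\p)=IR_\p$, and contracts, using primariness of $I$ to get $IR_\p\cap R=I$; you instead verify $(IM:M)=I$ directly by gluing the local identities $(I_\m M_\m:M_\m)=I_\m$ over the maximal ideals $\m\supseteq I$, which bypasses Remark~\ref{6}(6) and Lemma~\ref{lemaux}(5) entirely (the two arguments even produce different witnesses, $IM$ versus its $\p$-saturation). One shared caveat: both your gluing step and the paper's appeal to Proposition~\ref{free} require $M_\p\neq 0$ for the relevant primes $\p\supseteq I$; your justification via $\mathrm{Supp}(M)=V(\Ann(M))$ is automatic for finitely generated modules but is an implicit extra assumption for a general locally free module, and the paper's proof tacitly makes the same one.
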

\begin{proof}
(1) Let $N$ be a cyclic submodule of $M$ and $I\in D^R(\Ann(N))$. Then $N=R/\Ann(m)$ for some $m\in N$ and $(I/\Ann(N):N)=I$. Hence, $N$ is quasi-primeful. It is well-known that a finitely generated module over a $PID$ is finite direct sum of cyclic submodules. Hence, in the light of Lemma~\ref{directsum},  $M$ is quasi-primeful.
(2) Let $J\in q\Spec(R)$. Since $M$ is faithfully flat, $JM\neq M$ and by Remark~\ref{6}, $J$ is primary. So, $JM$ is a primary submodule by ~\cite[Theorem 3]{lu84}, and $(JM:M)=J$ is a quasi-prime ideal of $R$, i.e., $JM$ is quasi-prime.
(3) Let $I\in D(\Ann(M))$. Since $R$ is Laskerian, $\p:=\sqrt{I}$ is a prime ideal of $R$ and $IR_{\p}$ is a quasi-prime ideal of $R_{\p}$ by Remark~\ref{6}(6). Since $M_{\p}$ is a free $R_{\p}$-module, there exists a quasi-prime submodule $N$ of $M_{\p}$  such that $(N:_{R_{\p}}M_{\p})=IR_{\p}$ by Proposition~\ref{free}. Now, $(N\cap M:M)=IR_{\p}\cap R=I$ by Lemma~\ref{lemaux}. This implies that $M$ is quasi-primeful.
\end{proof}

We note that not every quasi-primeful module is finitely generated. For example, every (finite or infinite dimensional) vector space is quasi-primeful.

\begin{rem}\label{multi}(See \cite[Theorem 3.1]{zb88})
Let $M$ be a faithful multiplication module over $R$. Then $M$ is finitely generated if and only if $\m M\neq M$ for every maximal ideal $\m$ of $R$.
\end{rem}

\begin{prop}\label{pva}\label{paracor}
Let $M$ be a nonzero quasi-primeful $R$-module.
\begin{enumerate}
  \item Let $I$ be a radical ideal of $R$. Then $(IM : M )= I$ if and only if $\Ann(M)\subseteq I$;
  \item $\p M\in q\Spec(M)$ for every $\p\in V(\Ann(M))$;
  \item $\p M\in \Spec_{\p}(M)$ for every $\p\in V(\Ann(M))\cap \Max(R)$;
  \item If $\dim(R)=0$, then $M$ is primeful;
  \item If $M$ is multiplication, then $M$ is finitely generated.
\end{enumerate}
\end{prop}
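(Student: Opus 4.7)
\emph{Plan.} The common engine throughout the five items is the definition of quasi-primeful recast via $\psi$: for every $\p\in V(\Ann(M))$, the image $\p/\Ann(M)$ is a prime, hence quasi-prime, ideal of $R/\Ann(M)$, so surjectivity of $\psi$ produces some $L\in q\Spec(M)$ with $(L:M)=\p$. I would feed this single observation into each part.

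For (1), the inclusion $I\subseteq (IM:M)$ and the direction $(IM:M)=I\Rightarrow \Ann(M)\subseteq I$ are immediate. For the converse, I would invoke radicality to write $I=\bigcap_{\p\in V(I)}\p$. Each such $\p$ satisfies $\Ann(M)\subseteq I\subseteq \p$, so the engine gives $L\in q\Spec_{\p}(M)$, whence $IM\subseteq \p M\subseteq L$ and therefore $(IM:M)\subseteq (L:M)=\p$. Intersecting over $\p\in V(I)$ recovers $(IM:M)\subseteq I$. For (2), given $\p\in V(\Ann(M))$, take $L\in q\Spec_{\p}(M)$ as above and apply Lemma~\ref{lemaux}(1) to conclude $\p M=(L:M)M\in q\Spec_{\p}(M)$.

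Part (3) is then a standard maximality manipulation on top of (2): given $rm\in \p M$ with $r\notin \p$, the maximality of $\p$ gives $s\in R$ and $p\in \p$ with $1=sr+p$, so $m=s(rm)+pm\in \p M$; and $\p M\neq M$ is built into (2). Part (4) is automatic because $\dim R=0$ forces $V(\Ann(M))\subseteq \Max(R)$, so (3) makes the natural map $\Spec(M)\to \Spec(R/\Ann(M))$ surjective. For (5), I would pass to $M$ as a faithful multiplication module over $R/\Ann(M)$: every maximal ideal of $R/\Ann(M)$ has the form $\m/\Ann(M)$ with $\m\in \Max(R)\cap V(\Ann(M))$, and (3) gives $\m M\in \Spec_{\m}(M)$, hence $\m M\neq M$. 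Remark~\ref{multi} then forces $M$ to be finitely generated over $R/\Ann(M)$, and consequently over $R$. The only place that needs any thought is the backward direction of (1), where radicality is what allows us to reassemble $I$ from the primes above it and thus leverage quasi-primefulness; the remaining items are essentially mechanical once the engine is in place.
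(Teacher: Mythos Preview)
Your proof is correct and follows essentially the same route as the paper: the key engine in both is part~(1), proved by writing the radical ideal $I$ as $\bigcap_{\p\in V(I)}\p$ and using surjectivity of $\psi$ to produce $L_\p\in q\Spec_\p(M)$, which squeezes $(IM:M)$ back down to $I$. The only cosmetic difference is that the paper reads (2) and (3) off directly from (1) (a prime ideal is radical, so $(\p M:M)=\p$, and a proper submodule whose colon is maximal is automatically prime), whereas you route (2) through Lemma~\ref{lemaux}(1) and give an explicit B\'ezout argument for (3); for (4) the paper cites \cite[Result~3]{lu07} where you argue directly---but these are equivalent in substance.
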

\begin{proof}
(1) The necessity is clear. For sufficiency, we note that $\Ann(M)\subseteq I=\cap_i~\p_i$, where $\p_i$ runs through $V^R (I)$ since $I$ is a radical ideal. On the other hand, $M$ is quasi-primeful and $\p_i\in D(\Ann(M))$ so, there exists a quasi-prime submodule $L_i$ such that $(L_i:M)=\p_i$. Now, we obtain that $$I \subseteq (IM : M) = ((\cap_i ~\p_i)M : M)\subseteq \cap_i~(\p_iM : M)\subseteq \cap_i~(L_i: M) = \cap_i~ \p_i = I.$$ Thus $(IM : M) = I$. (2) and (3) follows from part~(1). For (4), let $\p\in V^R(\Ann(M))$. Then by part~(3), $\p M\neq M$ and  by \cite[Result 3]{lu07}, $M$ is primeful. (5) Since $M$ is a faithful multiplication module over $R/\Ann(M) = \bar{R}$ and $\bar{\m}M\neq M$ for every $\bar{\m}\in \Max(\bar{R})$ by (3), $M$ is finitely generated over $\bar{R}$ by Remark~\ref{multi}. Hence, $M$ is finitely generated over $R$.
\end{proof}

\begin{cor}
Let $M$ be an $R$-module.
\begin{enumerate}
\item Let $M$ be a quasi-primeful $R$-module. If $I$ is an ideal of R contained in the Jacobson radical $\Rad(R)$
such that $IM = M$, then $M = (\textbf{0})$.
  \item Let $R$ be a $PID$ and $M$ be torsion-free. Then $M$ is quasi-primeful if $pM\neq M$ for every irreducible element $p\in R$.
  \item If $M$ is faithful quasi-primeful, then $M$ is flat if and only if $M$ is faithfully flat.
  \item If $M$ is projective and $R$ is Laskerian, then $M$ is quasi-primeful.
\end{enumerate}
\end{cor}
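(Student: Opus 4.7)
The plan is to derive each part directly from earlier results in the paper, principally Proposition~\ref{paracor} and Theorem~\ref{mapa}. For (1), I would argue by contradiction: if $M \neq \textbf{0}$ then $\Ann(M)$ is proper, so there is a maximal ideal $\m$ containing it; Proposition~\ref{paracor}(3) then yields $\m M \in \Spec_{\m}(M)$, hence $\m M \subsetneq M$. Since $I \subseteq \Rad(R) \subseteq \m$, we get $IM \subseteq \m M \subsetneq M$, contradicting $IM = M$.

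For (2), I first observe that in a PID---being Dedekind---Remark~\ref{6}(9) identifies the quasi-prime ideals with the primary ideals, which are $(0)$ and $(p^n)$ for irreducible $p$ and $n \geq 1$. Torsion-freeness forces $\Ann(M) = 0$, so $D^R(\Ann(M)) = q\Spec(R)$, and I must lift each quasi-prime ideal to a quasi-prime submodule: over $(0)$ take $(\textbf{0})$ (assuming $M \neq \textbf{0}$, whose colon is $\Ann(M) = 0$); over $(p^n)$ take $p^n M$. The hypothesis $pM \neq M$ rules out $p^n M = M$ (else $M \subseteq pM$), and the main computation is to show $(p^n M :_R M) = (p^n)$: given $r = p^k u \notin (p^n)$ with $\gcd(u,p) = 1$ and $k < n$, and $m \in M \setminus pM$, the assumption $rm \in p^n M$ combined with torsion-freeness yields $um = p^{n-k} m'$; a Bezout identity $au + bp^{n-k} = 1$ then forces $m \in pM$, a contradiction. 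This colon identity is the only genuine technical step in the whole corollary, and I expect it to be the main obstacle.

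Part (3) rests on the standard fact that a flat $R$-module is faithfully flat iff $\m M \neq M$ for every maximal ideal $\m$. Since $M$ is faithful, $\Ann(M) = 0 \subseteq \m$ for every $\m \in \Max(R)$, so Proposition~\ref{paracor}(3) gives $\m M \in \Spec_{\m}(M)$, whence $\m M \neq M$; combined with flatness this yields faithful flatness, while the converse is immediate. Finally, for (4), Kaplansky's theorem says projective modules over local rings are free, so any projective $R$-module is locally free; Theorem~\ref{mapa}(3) then applies directly to produce quasi-primefulness.
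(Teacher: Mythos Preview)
Your proofs of parts (1), (3), and (4) match the paper's essentially line for line: contradiction via Proposition~\ref{paracor}(3) for (1), the flat-plus-$\m M\neq M$ criterion for (3), and the locally-free reduction for (4).

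Part (2) is where you diverge. The paper disposes of it in one line: over a PID, torsion-free modules are flat, and the hypothesis $pM\neq M$ for every irreducible $p$ means $\m M\neq M$ for every maximal ideal $\m$, so $M$ is faithfully flat; since a PID is a Dedekind domain, Theorem~\ref{mapa}(2) applies directly. Your approach is instead a hands-on verification: you enumerate the quasi-prime ideals of $R$ as $(0)$ and $(p^n)$ via Remark~\ref{6}(9), and exhibit an explicit preimage for each, the only real work being the colon computation $(p^nM:M)=(p^n)$ using torsion-freeness and a B\'ezout identity. Your argument is correct and entirely self-contained, but it reproves by hand a special case of what Theorem~\ref{mapa}(2) already covers; the paper's route is shorter precisely because it cashes in on that earlier result. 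On the other hand, your computation makes transparent exactly which submodule sits over each quasi-prime ideal, which the faithful-flatness argument leaves implicit.
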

\begin{proof}
(1) Suppose that $M\neq(\textbf{0})$. Then $\Ann(M)\neq R$. If $\m$ is any maximal ideal containing $\Ann(M)$, then $I \subseteq \Rad(R)\subseteq \m$ and $IM = M = \m M$ whence $(\m M : M) = R \neq \m$, a contradiction to Proposition~\ref{pva}. (2) If for every irreducible element $p\in R$, $pM\neq M$, then $M$ is faithfully flat and by Theorem~\ref{mapa}, $M$ is quasi-primeful. (3) The sufficiency is clear. Suppose that $M$ is flat. By Proposition~\ref{pva}, for every $\p\in \Max(R)\subseteq D(0)$, $\p M\neq M$. This implies that $M$ is faithfully flat. (4) Since every projective module is locally free, by Theorem~\ref{mapa}, $M$ is quasi-primeful.
\end{proof}

\begin{eg}
The $\mathbb{Z}$-module $\mathbb{Q}$ is a flat and faithful, but not faithfully flat. So, $\mathbb{Q}$ is not quasi-primeful.
\end{eg}

We give an elementary example of a module which is not quasi-primeful. If $R$ is a domain, then an $R$-module $M$ is \emph{divisible} if $M = rM$ for all nonzero elements $r\in R$. We note that every injective module is divisible.

\begin{prop}\label{divi}
Let $R$ be a domain which is not a field. Then every nonzero divisible $R$-module is not quasi-primeful.
\end{prop}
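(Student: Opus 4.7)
The plan is to exhibit a quasi-prime (in fact, maximal) ideal of $R/\Ann(M)$ that is not in the image of the natural map $\psi$, thereby contradicting quasi-primefulness.

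First I would compute $\Ann(M)$. If $r \in \Ann(M)$ is nonzero, then $rM = 0$; but divisibility gives $rM = M$, forcing $M = 0$, contrary to hypothesis. Hence $\Ann(M) = (0)$ and $R/\Ann(M) = R$. So the natural map takes the form $\psi : q\Spec(M) \to q\Spec(R)$ with $\psi(L) = (L:M)$.

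Next I would produce a target not hit by $\psi$. Since $R$ is a domain but not a field, it has a nonzero non-unit, hence a nonzero maximal ideal $\m$. By Remark~\ref{6}(3), $\m \in \Spec(R) \subseteq q\Spec(R)$. Suppose for contradiction that $\m$ lies in the image of $\psi$, so there exists a (proper) quasi-prime submodule $N \leq M$ with $(N:M) = \m$. Pick any nonzero $r \in \m$. Then $r \in (N:M)$ gives $rM \subseteq N$, while divisibility of $M$ gives $rM = M$ (this is where $r \neq 0$ is used). Thus $M \subseteq N$, contradicting that quasi-prime submodules are proper.

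The only subtlety is ensuring the existence of the nonzero maximal ideal $\m$, which uses precisely the hypothesis that $R$ is not a field; the rest is a direct application of the definitions of divisibility and of $(N:M)$. Hence $\psi$ is not surjective and $M$ fails to be quasi-primeful.
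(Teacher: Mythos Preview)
Your proof is correct and follows essentially the same approach as the paper: both compute $\Ann(M)=(0)$, pick a nonzero prime (you use a maximal ideal $\m$, the paper a prime $\p$), and use divisibility to force $\p M = M$, contradicting the existence of a proper submodule $N$ with $(N:M)=\p$. The only cosmetic difference is that the paper packages the last step as an appeal to Proposition~\ref{paracor}(2), whereas you unpack that implication directly.
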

\begin{proof}
By assumption $\Ann(M)=(0)$ and there exists a nonzero prime ideal $\p$ of $R$. Hence $\p\in V^R(\Ann(M))$ and $\p M = M$. Therefore, $M$ is not quasi-primeful by Proposition~\ref{paracor}.
\end{proof}

\begin{prop}
Let $R$ be a domain over which every module is quasi-primeful. Then $R$ is a field.
\end{prop}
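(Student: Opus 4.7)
The plan is to argue by contrapositive: assume $R$ is a domain that is not a field, and exhibit a single $R$-module that fails to be quasi-primeful. This immediately violates the hypothesis that \emph{every} $R$-module is quasi-primeful, forcing $R$ to be a field.

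The natural candidate is the field of fractions $K$ of $R$. I would first observe that $K$ is a nonzero $R$-module and is divisible: for any nonzero $r \in R$, the map $K \to K$ given by multiplication by $r$ is surjective, so $rK = K$. Since $R$ is a domain which is not a field by assumption, Proposition~\ref{divi} applies directly to $K$ and says $K$ is not quasi-primeful.

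This contradicts the hypothesis that every $R$-module over $R$ is quasi-primeful, so $R$ must be a field. The key step is really just invoking the preceding Proposition~\ref{divi} with the canonical divisible module $K$; no further computation is needed. (Alternatively, one could use any nonzero injective $R$-module, since injective modules over a domain are divisible, but $K$ is the simplest concrete witness.)

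There is essentially no obstacle here — the real content lies in Proposition~\ref{divi}, which has already been established via Proposition~\ref{paracor}. The present proposition is a clean corollary: it converts the statement ``not-a-field produces at least one non-quasi-primeful module'' into the contrapositive statement ``every module quasi-primeful forces field.''
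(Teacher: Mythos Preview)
Your proof is correct and follows essentially the same approach as the paper: assume $R$ is not a field, take its field of fractions $K$ as a nonzero divisible $R$-module, and apply Proposition~\ref{divi} to conclude $K$ is not quasi-primeful, contradicting the hypothesis. No differences worth noting.
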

\begin{proof}
Suppose that $R$ is not a field. Then its field $K$ of quotients is a nonzero
divisible $R$-module. Hence, $K$ is not quasi-primeful over $R$ by Proposition~\ref{divi}, which is a contradiction to the definition of $R$.
\end{proof}

An $R$-module $M$ is called \emph{weak multiplication} if
$\Spec(M) =\emptyset$ or for every prime submodule $N$ of $M$, we have $N = IM$,
where $I$ is an ideal of $R$. One can easily show that if $M$ is a weak multiplication module, then $N = (N : M)M$ for every prime submodule $N$ of $M$ (\cite{Abu95} and \cite{az03}).
As is seen in \cite{Abu95}, $\mathbb{Q}$ is a weak multiplication $\mathbb{Z}$-module which is not a multiplication module.

\begin{defn}
An $R$-module $M$ is called \textit{quasi-prime-embedding}, if the natural map $\psi :  q\Spec(M)\rightarrow
 q\Spec(R/\Ann(M))$ is injective.
\end{defn}

We will show that every cyclic module is quasi-prime-embedding (Corollary~\ref{maincor}).
Thus any ring $R$ as $R$-module is quasi-prime-embedding.

\begin{prop}\label{t0inj}
The following statements are equivalent for any $R$-module $M$:
\begin{enumerate}
  \item $M$ is quasi-prime-embedding;
  \item If $D (L) = D (N)$, then $L = N$, for any $L, N\in  q\Spec(M)$;
  \item $|q\Spec_I(M)| \leq 1$ for every $I\in  q\Spec(R)$.
\end{enumerate}
\end{prop}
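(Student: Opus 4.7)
The plan is to observe that all three statements are essentially rephrasings of the same condition, namely that a quasi-prime submodule of $M$ is determined by its colon ideal $(N:M)$, and then link them in a small cycle or through a common pivot.

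First I would translate (1) into an elementary form. By the definition of $\psi$, since $(L:M)$ and $(N:M)$ both contain $\Ann(M)$, the equality $\psi(L)=\psi(N)$ is equivalent to $(L:M)=(N:M)$. Hence injectivity of $\psi$ is the statement: whenever $L,N\in q\Spec(M)$ satisfy $(L:M)=(N:M)$, we have $L=N$. This is literally the content of (3), since $|q\Spec_I(M)|\le 1$ for every $I\in q\Spec(R)$ means no quasi-prime ideal $I$ is realized as $(N:M)$ by two distinct quasi-prime submodules. So (1) $\Leftrightarrow$ (3) is immediate.

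For the remaining equivalence I would pivot through the equality of colon ideals. The key observation is that $L\in D(L)$ and $N\in D(N)$ always hold. Therefore $D(L)=D(N)$ forces $L\in D(N)$ and $N\in D(L)$, which by the definition of $D$ gives $(L:M)\supseteq (N:M)$ and $(N:M)\supseteq (L:M)$, i.e., $(L:M)=(N:M)$. Conversely, the set $D(K)$ depends only on $(K:M)$, so $(L:M)=(N:M)$ trivially yields $D(L)=D(N)$. Thus $D(L)=D(N)\Leftrightarrow (L:M)=(N:M)$, and (2) becomes the statement $(L:M)=(N:M)\Rightarrow L=N$, which is again precisely (1).

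There is no real obstacle here; the proof is essentially a chain of unravelings of the definitions, and the only substantive step is the trivial remark that $L\in D(L)$. I would present it as the two short equivalences (1)$\Leftrightarrow$(3) and (1)$\Leftrightarrow$(2) rather than as a cycle, since each direction is one line.
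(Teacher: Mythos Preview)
Your proof is correct and follows essentially the same approach as the paper: both reduce all three statements to the single condition ``$(L:M)=(N:M)\Rightarrow L=N$'' by unwinding the definitions of $\psi$, $D(\cdot)$, and $q\Spec_I(M)$. The only cosmetic difference is that the paper presents the implications as a cycle $(1)\Rightarrow(2)\Rightarrow(3)\Rightarrow(1)$ whereas you pivot through (1), and you are slightly more explicit in justifying $D(L)=D(N)\Leftrightarrow (L:M)=(N:M)$ via $L\in D(L)$.
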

\begin{proof}
$(1)\Rightarrow (2)$ Let $D(L) =  D(N)$. Then $(L : M) = (N : M )$. Now by (1), $L = N$. $(2)\Rightarrow (3)$ Suppose that $L, N\in q\Spec_I(M)$ for some $I\in q\Spec(R)$. Hence $(L : M) = (N : M )= I$ and so, $D(L) =  D(N)$. Thus, $L = N$ by (2). $(3)\Rightarrow (1)$ Let $\bar{I}:=\psi(L)=\psi(N)$. Then $I=(L : M) = (N : M )$. By (3), $L = N$, and so $\psi$ is injective.
\end{proof}

\begin{cor}\label{maincor}
Consider the following statements for an $R$-module $M$ :
\begin{enumerate}
  \item $M$ is multiplication;
  \item $M$ is quasi-prime-embedding;
  \item $M$ is weak multiplication;
  \item $|\Spec_{\p}(M)|\leq 1$ for every prime ideal $\p$ of $R$;
  \item $M/pM$ is cyclic for every maximal ideal $\p$ of $R$.
\end{enumerate}
Then $(1)\Rightarrow (2)\Rightarrow (3)\Rightarrow (4)\Rightarrow (5)$. Further, if $M$ is finitely generated, then $(5)\Rightarrow (1)$.
\end{cor}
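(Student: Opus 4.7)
The plan is to reduce each of the first three implications to the characterization $|q\Spec_I(M)|\le 1$ supplied by Proposition~\ref{t0inj}, exploit the standing identity $N=(N:M)M$ enjoyed by every prime submodule of a weak multiplication module, and then handle $(5)\Rightarrow(1)$ by a Nakayama/localization argument.

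For $(1)\Rightarrow(2)$ I would observe that in a multiplication module every submodule $L$ satisfies $L=(L:M)M$ (since $L=JM$ gives $J\subseteq(L:M)$ hence $L\subseteq(L:M)M\subseteq L$), so any two members of $q\Spec_I(M)$ both equal $IM$, and Proposition~\ref{t0inj} applies. For $(2)\Rightarrow(3)$, pick any prime submodule $N$ with $(N:M)=\p$; Lemma~\ref{lemaux}(1) places $\p M=(N:M)M$ in $q\Spec_\p(M)$, and $N$ lies in the same fibre, so the injectivity of $\psi$ forces $N=\p M$, showing $M$ is weak multiplication. For $(3)\Rightarrow(4)$, the identity $N=(N:M)M$ (recalled in the paragraph preceding the definition of quasi-prime-embedding) immediately gives $N_1=\p M=N_2$ whenever $N_1,N_2\in\Spec_\p(M)$.

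For $(4)\Rightarrow(5)$ fix a maximal ideal $\p$ and consider $V:=M/\p M$ as a vector space over the field $R/\p$. Each codimension-one subspace of $V$ pulls back along $M\twoheadrightarrow V$ to a maximal submodule $N\supseteq\p M$ of $M$, which is automatically prime by Remark~\ref{remaux}(1); the chain $\p\subseteq(N:M)\subsetneq R$ combined with the maximality of $\p$ forces $(N:M)=\p$, so the pull-back lies in $\Spec_\p(M)$. Distinct codimension-one subspaces of $V$ then produce distinct elements of $\Spec_\p(M)$, so hypothesis (4) caps $\dim_{R/\p}V\le 1$, i.e.\ $M/\p M$ is cyclic.

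Finally, for $(5)\Rightarrow(1)$ under finite generation, I would localize: since $M$ is finitely generated and $M/\p M$ is cyclic, Nakayama's lemma upgrades this to $M_\p$ being a cyclic $R_\p$-module at every maximal $\p$. For an arbitrary submodule $N\le M$, cyclicity of $M_\p$ yields $N_\p=(N_\p:_{R_\p}M_\p)M_\p=((N:_R M)M)_\p$ at each maximal $\p$, and the usual local-global principle for submodules of $M$ then gives $N=(N:M)M$, so $M$ is multiplication. The chief obstacle I anticipate is this last step: one must propagate cyclicity of each $M_\p$ into the global equation $N=(N:M)M$ for \emph{every} submodule $N$, which is where the finite generation hypothesis is essential. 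The earlier implications are near-formal consequences of Proposition~\ref{t0inj} and the definitions.
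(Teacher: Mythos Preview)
Your proof is correct. For $(1)\Rightarrow(2)\Rightarrow(3)\Rightarrow(4)$ you follow exactly the paper's route via Proposition~\ref{t0inj} and Lemma~\ref{lemaux}(1). Where you diverge is in $(4)\Rightarrow(5)$ and $(5)\Rightarrow(1)$: the paper simply invokes \cite[Theorem~3.5]{mms97} for both steps, whereas you supply direct arguments---the hyperplane count in the $R/\p$-vector space $M/\p M$ for the former, and Nakayama followed by the local--global identification $N_\p=((N:M)M)_\p$ for the latter. Your arguments are sound (the finite generation of $M$ is used exactly where it must be, to get $(N_\p:_{R_\p}M_\p)=(N:M)_\p$ and to run Nakayama), and they have the virtue of being self-contained; the paper's citation is terser but leans on an external characterization of finitely generated multiplication modules.
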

\begin{proof}
$(1)\Rightarrow (2)$ Let $D(N)=D(L)$ for  $N$, $L\in  q\Spec(M)$. Then $(N:M)=(L:M)$ and since $M$ is multiplication, $N=L$. Therefore, (2) follows from Proposition~\ref{t0inj}. $(2)\Rightarrow (3)$ Let $P$ be a $\p$-prime submodule of $M$. By Lemma~\ref{lemaux}, $(P:M)M\in q\Spec_{\p}(M)$. Combining this fact with Proposition~\ref{t0inj}, we obtain that $P=(P:M)M$. This yields $M$ is weak multiplication. $(3)\Rightarrow (4)$ The case $\Spec_{\p}(M)=\emptyset$ is trivially true. Let $P$, $Q\in \Spec_{\p}(M)$ for some prime ideal $\p$ of $R$. Then $(P:M)=(Q:M)$. Therefore $P=(P:M)M=(Q:M)M=Q$. The $(4)\Rightarrow (5)$ and last statement is true due to ~\cite[Theorem~3.5]{mms97}.
\end{proof}

An $R$-module $M$ is called \emph{locally cyclic} if $M_{\p}$ is a cyclic module over the local ring $R_{\p}$ for every prime ideal $\p$ of $R$. Multiplication modules are locally
cyclic (see \cite[Theorem 2.2]{zb88}).

\begin{thm}\label{freet0}
Let $M$ be an $R$-module and let $S$ be a multiplicatively closed subset of $R$.
\begin{enumerate}
  \item If $M$ is Laskerian quasi-prime-embedding, then every quasi-prime submodule of $M$ is primary (see \cite[Theorem 2.1]{az08}).
  \item Let $R$ be a serial ring. Then $M$ is multiplication if and only if $M$ is quasi-prime-embedding.\label{serial}
  \item If $M$ is quasi-prime-embedding, then $S^{-1}M$ is also a quasi-prime-embedding $S^{-1}R$-module.\label{t0local}
  \item If $M$ is free, Then $M$ is quasi-prime-embedding if and only if $M$ is cyclic.\label{t0fr}
  \item If $M$ is projective quasi-prime-embedding, then $M$ is locally cyclic.
  \item If $R$ is an arithmetical ring and $M$ is quasi-prime-embedding, then $M$ is locally cyclic.\label{arith}
  \item Let $R$ be a semi-local arithmetical ring. Then $M$ is cyclic if and only if $M$ is quasi-prime-embedding.
  \item A finitely generated module $M$ is locally cyclic if and only if $M$ is multiplication if and only if $M$ is quasi-prime-embedding.
  \item Let $R$ be a Dedekind domain and $M$ be a non-faithful quasi-prime-embedding \, $R$-module. Then $M$ is cyclic.
\end{enumerate}
\end{thm}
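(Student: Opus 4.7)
The plan is to reduce to the semi-local arithmetical setting and invoke part~(7) of the present theorem. Since $M$ is non-faithful, $\Ann(M)$ is a nonzero proper ideal of the Dedekind domain $R$, so it factors uniquely as $\Ann(M)=\p_1^{e_1}\cdots\p_k^{e_k}$ for distinct nonzero primes $\p_i$. By the Chinese Remainder Theorem, $\bar{R}:=R/\Ann(M) \cong \prod_{i=1}^k R_{\p_i}/\p_i^{e_i}R_{\p_i}$, and each factor is a quotient of a DVR and hence a local ring whose ideals form a chain. Consequently $\bar{R}$ is semi-local, and its localization at any maximal ideal is one of these chain rings, so $\bar{R}$ is arithmetical.

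Next I would transfer the quasi-prime-embedding property from $R$ to $\bar{R}$. The $R$-submodules of $M$ coincide with its $\bar{R}$-submodules, and for any submodule $N$ one has $(N:_{\bar{R}} M) = (N:_R M)/\Ann(M)$. The key observation is that an ideal $I$ of $R$ containing $\Ann(M)$ is quasi-prime in $R$ if and only if $I/\Ann(M)$ is quasi-prime in $\bar{R}$. Since both $R$ (being Dedekind) and $\bar{R}$ are arithmetical, Remark~\ref{6}(7) allows one to replace quasi-prime by irreducible, and irreducibility clearly transfers in both directions under the ideal-correspondence theorem. Hence the natural maps of $q\Spec(M)$ over $R$ and over $\bar{R}$ are identified, so $M$ is a quasi-prime-embedding $\bar{R}$-module as well.

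Now part~(7) of Theorem~\ref{freet0}, applied to the semi-local arithmetical ring $\bar{R}$, yields that $M$ is cyclic over $\bar{R}$, and therefore cyclic over $R$. The one step requiring some care is the base-change argument: verifying that quasi-primeness, and with it the injectivity of the natural map, behaves compatibly under the quotient by $\Ann(M)$. The ring-theoretic structure of $\bar{R}$ is then immediate from unique factorization in $R$, and the rest of the argument is a direct appeal to part~(7).
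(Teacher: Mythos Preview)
Your argument for part~(9) is correct, but it takes a somewhat longer route than the paper. The paper stays over $R$ throughout: since $\Ann(M)\neq 0$ in the Dedekind domain $R$, only finitely many maximal ideals contain $\Ann(M)$; since Dedekind domains are arithmetical, part~(\ref{arith}) gives that $M$ is locally cyclic; and then \cite[Lemma~3]{Bar81} (the same lemma used inside the proof of part~(7)) yields directly that $M$ is cyclic.

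Your approach passes to $\bar{R}=R/\Ann(M)$ and invokes part~(7) instead. This is perfectly valid, and your justification that quasi-prime-embedding transfers to $\bar{R}$ is the genuinely delicate point: the equivalence ``$I$ quasi-prime in $R$ $\Leftrightarrow$ $I/\Ann(M)$ quasi-prime in $\bar{R}$'' is \emph{not} automatic for arbitrary rings, but your use of Remark~\ref{6}(7) to pass through irreducibility (which does transfer, since any ideals $A,B$ with $A\cap B=I$ automatically contain $\Ann(M)$) handles it correctly. The payoff of your route is a clean reduction to an already-proved case; the cost is the extra base-change verification, which the paper's direct argument avoids entirely.
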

\begin{proof}
(1) Let $P$ be a quasi-prime submodule of $M$ and $\bigcap_{i=1}^m N_i$ be a primary decomposition for $P$. Since $P$ is quasi-prime, $$(N_j:M)\subseteq (P:M)=\bigcap_{i=1}^m (N_i:M)\subseteq (N_j:M)$$ for some $1\leq j\leq m$. Hence, $N_j$ is a quasi-prime submodule and by Proposition~\ref{t0inj}, $P=N_j$.\\
(2) The necessity follows from Corollary~\ref{maincor}.  Let $N$ be a proper submodule of $M$. By Lemma~\ref{lemaux}, $N$ and $(N:M)M$ are quasi-prime submodules of $M$. Therefore, $N=(N:M)M$ by Proposition~\ref{t0inj}, and so $M$ is multiplication.\\
(3) Use Lemma~\ref{lemaux} and Proposition~\ref{t0inj}.\\
(4) If $M$ is cyclic, then $M$ is quasi-prime-embedding by Corollary~\ref{maincor}. We assume that $M$ is quasi-prime-embedding and $M$ is not cyclic. Hence, $M=\oplus_{i\in I} R$, where $|I|> 1 $. Let $\p\in q\Spec(R)$ and $\alpha, \beta$ be two distinct elements of $I$. It is easy to see that $$N=\p\oplus(\oplus_{\substack{i\in I\\i\neq \alpha}} R) \quad \text{  and  }\quad  L=\p\oplus(\oplus_{\substack{i\in I\\i\neq \beta}} R)$$ are two distinct quasi-prime submodules of $M$  with $(N:M)=(L:M)=\p$. By Proposition~\ref{t0inj}, $N=L$, a contradiction.\\
(5) Let $\p\in \Spec(R)$. Then by (\ref{t0local}), $M_{\p}$ is quasi-prime-embedding. On the other hand, $M_{\p}$ is a free $R_{\p}$-module. Hence,  $M_{\p}$ is a cyclic $R_{\p}$-module by (\ref{t0fr}).\\
(6) For each $\p\in \Spec(R)$, $R_P$ is a serial ring by \cite[Theorem~1]{Jen66}, and $M_{\p}$ is quasi-prime-embedding by (\ref{t0local}). By (\ref{serial}), $M_{\p}$ is a multiplication $R_{\p}$-module. Therefore, $M_{\p}$ is cyclic, since $R_{\p}$ is a quasi-local ring.\\
(7) Let $\m_1, \cdots , \m_t$ be all maximal ideals of $R$. By (\ref{arith}), $M_{\m_i}$ is a cyclic $R_{\m_i}$-module for each $i$. Hence, $M$ is cyclic by \cite[Lemma 3]{Bar81}. Other side is true by Corollary~\ref{maincor}.\\
(8) Use \cite[Proposition 5]{Bar81} and Corollary~\ref{maincor}.\\
(9) By assumption there exist only finitely many prime (maximal) ideal containing $\Ann(M)$. So, by (\ref{arith}), and  \cite[Lemma 3]{Bar81}, $M$ is cyclic.
\end{proof}

A submodule $S$ of an $R$-module $M$ will be called \emph{semiprime} if $S$ is an intersection of prime submodules. A prime submodule $K$ of $M$ is said to be \emph{extraordinary} if whenever $N$ and $L$ are semiprime submodules of $M$ with $N\cap L \subseteq K$, then $N\subseteq K$ or $L\subseteq K$. An $R$-module $M$ is said to be a \emph{top module} if every prime submodule of $M$ is extraordinary. Every multiplication or locally cyclic module is a top module (see \cite{mms97}).
Corollary~\ref{maincor} and Theorem~\ref{freet0} are very interesting for us, because there is a close relationship between those and top modules. We find the relations
between parts (1)-(4) of Corollary~\ref{maincor} and top modules. By \cite[Theorem~3.5]{mms97}, every multiplication module is top. So we consider part (2) of Corollary~\ref{maincor}. By Theorem~\ref{freet0}, every projective quasi-prime-embedding module and every quasi-prime-embedding module over arithmetical ring is locally cyclic,  so is top due to \cite[Theorem~4.1]{mms97}. In the next theorem we will show the relationship between part~(3) and part~(4) of Corollary~\ref{maincor} and top modules.

\begin{thm}\label{maintop}
Let $R$ be a one dimensional Noetherian domain and let $M$ be a nonzero $R$-module. Then $M$ is a top module in each of the following cases:
\begin{enumerate}
  \item $M$ is weak multiplication.
  \item For every prime ideal $\p\in \Spec(R)$, $|\Spec_{\p}(M)|\leq 1$ and $S_{(0)}(\textbf{0})\subseteq \rad(\textbf{0})$.
\end{enumerate}
\end{thm}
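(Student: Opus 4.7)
The plan is to verify that every prime submodule $K$ of $M$ is extraordinary: for all semiprime submodules $N$ and $L$ of $M$ with $N\cap L\subseteq K$, either $N\subseteq K$ or $L\subseteq K$. Because $R$ is a one dimensional Noetherian domain, $(K:M)$ is either $(0)$ or a maximal ideal, so the analysis proceeds by this dichotomy. First I would record that in both (1) and (2) the natural map $\psi:\Spec(M)\to\Spec(R/\Ann(M))$ is injective: this is built into (2) via $|\Spec_\p(M)|\leq 1$, and follows in (1) from Corollary~\ref{maincor}. Consequently each $\p\in\Spec(R)$ admits at most one $\p$-prime submodule of $M$, and in case (1) this unique $\p$-prime is precisely $\p M$. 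Together with $\dim R=1$, the spectrum $\Spec(M)$ decomposes into at most one $(0)$-prime together with a family of $\m$-primes indexed by certain maximals $\m$ of $R$, and every semiprime submodule of $M$ is an intersection of such primes.

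For $(K:M)=\m$ maximal, assume for contradiction that $N\cap L\subseteq K$ while $N\not\subseteq K$ and $L\not\subseteq K$. Using the uniqueness above, the prime decompositions of $N$ and $L$ avoid the $\m$-prime $K$; exploiting the comaximality of distinct maximal ideals of a one dimensional Noetherian domain (any finite intersection of such maximals is comaximal with $\m$), I would derive a contradiction via the non-containment of $N\cap L$ in $K$, treating any infinite intersections that appear by passing to the $(0)$-prime analysis below.

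For $(K:M)=(0)$: in case (1), weak multiplication forces $K=(0)M=\textbf{0}$, so $M$ is faithful and torsion-free, and one must show that $N,L$ both nonzero implies $N\cap L$ nonzero. A nonzero semiprime submodule has the form $\bigcap_{\m\in S}\m M$, and $S$ must be finite, because in a one dimensional Noetherian domain an infinite intersection of distinct maximals is zero and torsion-freeness transfers this vanishing to $M$; once $S$ is finite, $\bigcap_{\m\in S}\m M \supseteq \bigl(\prod_{\m\in S}\m\bigr)M\neq \textbf{0}$ by faithfulness, delivering the conclusion. In case (2), the hypothesis $S_{(0)}(\textbf{0})\subseteq\rad(\textbf{0})$ identifies the torsion submodule $t(M)=S_{(0)}(\textbf{0})$ as sitting inside every prime of $M$, so I would pass to $\bar M=M/t(M)$, which is faithful and torsion-free and retains the hypotheses of (2), and use the resulting bijection between primes (and hence semiprimes) of $M$ and $\bar M$ to reduce to the faithful torsion-free situation already handled.

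The main obstacle is the $(0)$-prime regime, specifically the claim that a nonzero semiprime submodule of a faithful torsion-free $M$ over a one dimensional Noetherian domain involves only finitely many maximals in its defining intersection. This exploits $\dim R=1$ crucially, since only in dimension one does an infinite intersection of distinct maximals vanish in $R$, and torsion-freeness of $M$ is needed to transfer this vanishing to submodules of $M$. Setting up the $M/t(M)$ reduction for case (2) and keeping the prime--semiprime correspondences intact under this quotient is the other technical point requiring care, but the injectivity of $\psi$ makes the identification clean.
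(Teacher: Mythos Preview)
Your overall strategy differs from the paper's, and the central gap is in how you handle possibly infinite prime decompositions of the semiprime submodules $N$ and $L$. The paper does not split by whether $(K:M)$ is zero or maximal; instead it fixes $x\in N\setminus K$, observes that $(L:x)\subseteq (K:M)$ (because $r\in(L:x)$ gives $rx\in N\cap L\subseteq K$, and $x\notin K$ with $K$ prime forces $r\in(K:M)$), and then writes $(L:x)=\bigcap_{\lambda\in\Lambda'}\p_\lambda$ where $\Lambda'=\{\lambda:x\notin\p_\lambda M\}$. The paper never needs the full index set $\Lambda$ to be finite: it only needs $\Lambda'$ to be finite, and that follows because $(L:x)$ is a nonzero ideal in a one-dimensional Noetherian domain, hence contained in only finitely many maximals. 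This is the device your proposal lacks.

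Your plan instead tries to show that every nonzero semiprime submodule is a \emph{finite} intersection of primes, and this is false in case~(1) when $M$ is torsion. Take $R=\mathbb{Z}$ and $M=\bigoplus_p\mathbb{Z}/p\mathbb{Z}$: this is weak multiplication (each $pM$ is the unique $p$-prime and there is no $(0)$-prime), yet $N=\bigcap_{p\neq 2}pM=\mathbb{Z}/2\mathbb{Z}\oplus 0\oplus\cdots$ is a nonzero semiprime that is genuinely an infinite intersection. Your fallback of ``passing to the $(0)$-prime analysis'' does not apply here since $M$ has no $(0)$-prime, and your comaximality step yields only $N\cap L\supseteq(\bigcap_{\m'}\m')M=(0)M=0$, which is vacuous. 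Even in the torsion-free regime your justification ``torsion-freeness transfers this vanishing to $M$'' is not valid as stated: in general $\bigcap_{\m\in S}\m M$ strictly contains $(\bigcap_{\m\in S}\m)M$. The conclusion you want is actually true under the hypotheses, but only because weak multiplication (respectively $|\Spec_{(0)}(M)|\le 1$) forces a torsion-free $M$ to have rank at most one, and one must then argue via $\m M\cap R=\m$; this is a nontrivial step you have not supplied. Your reduction in case~(2) to $\bar M=M/t(M)$ is a clean idea---and genuinely different from the paper, which instead invokes $S_{(0)}(\textbf{0})\subseteq\rad(\textbf{0})$ directly inside the $(L:x)=0$ subcase---but it still lands you in the same unfinished torsion-free analysis.
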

\begin{proof}
\begin{enumerate}

\item  Let $P$ be a $\p$-prime submodule of $M$ and let $N$ and $L$ be non-zero semiprime submodules of $M$ such that $N \cap L \subseteq P$. It is enough to show that $N \subseteq P$ or $L\subseteq P$. If $(N:M)$ or $(L:M)$ $\not\subseteq (P:M)$, then $L\subseteq P$ or $N\subseteq P$ by \cite[Lemma~2]{lu89}. Hence, we consider just the case that $(L:M)\subseteq (P:M)$ and $(N:M)\subseteq (P:M)$. Now, we are going to show that if $N\not\subseteq P$, then $L \subseteq P$. For that, choose $x \in N \backslash P$. So, $x\not\in L$. If $(L:x)=(0)$, then $x+L \not\in S_{(0)}(\textbf{0}_{M/L})$, so $S_{(0)}(\textbf{0}_{M/L})\neq M/L$. Since $M$ is weak multiplication, it follows that $M/L$ is also a weak multiplication module. But every weak multiplication module over an integral domain is either torsion or torsion-free (see \cite[Proposition 3]{az03}). Hence $M/L$ is a torsion-free $R$-module.

    On the other hand, we have $(L:M) \subseteq (L:x)=(0)$. Thus $L \in \Spec_{(0)}M$ by \cite[Theorem 1]{lu84}. Therefore $L=(0)M=(\textbf{0}) \subseteq P$ as desired. Now let $(L:x)\neq (0)$ and $L=\bigcap_{\lambda \in \Lambda}P_{\lambda}$, where $P_{\lambda}$ are $\p_\lambda$-prime submodules of $M$ for each $\lambda \in \Lambda$. By assumption $P_{\lambda}=\p_{\lambda}M$. This implies that $$(L:x)=(\bigcap_{\lambda\in \Lambda}\p_{\lambda}M:x)=\bigcap_{\lambda\in \Lambda}(\p_{\lambda}M:x).$$ Suppose that $\Lambda'$ be a subset of $\Lambda$ such that for each $\lambda\in\Lambda'$, $x\not\in \p_{\lambda}M$. Since $x\not\in L$, hence $\Lambda'\neq\emptyset$. Now by \cite[Lemma~2.12]{mms02} and since $\dim(R)=1$, $$(0)\neq(L:x)=\bigcap_{\lambda\in \Lambda'}(\p_{\lambda}M:x)=\bigcap_{\lambda\in \Lambda'}\p_{\lambda}\subseteq(P:M).$$ Therefore, $(L:x)$ is a nonzero ideal of $R$, and so it is contained in only finitely many prime ideal by \cite[Proposition 9.1]{ati69}. Thus,  $\Lambda'$ is a finite set. It follows that there exists $\q \in\Lambda'$ such that $\q\subseteq \p$. This yields $L\subseteq \p M=P$ as desired.

\item  If $S_{(0)}(\textbf{0})=M$, then $\rad(\textbf{0})=M$, i.e., $\Spec(M)=\emptyset$, and so we are done. Therefore, we assume that  $S_{(0)}(\textbf{0})\neq M$. In this case $S_{(0)}(\textbf{0})$ is a $(0)$-prime submodule of $M$ by \cite[Lemma 4.5]{lu03}. We are going to show that every prime submodule of $M$ is extraordinary. Let $P$ be a prime submodule of $M$ and let $N$ and $L$ be two nonzero semiprime submodules of $M$ such that $N\cap L \subseteq P$. In view of above arguments  we take $x\in N\setminus P$. If $(L:x)=(0)$, then $(L:M)=(0)$ and by \cite[Result 1]{lu03}, $$S_{(0)}(\textbf{0}_{M/L})=S_{(0)}(\textbf{0})/L\subseteq \rad(\textbf{0})/L=(\textbf{0}).$$ Therefore, $M/L$ is a torsion-free $R$-module and $L$ is a $(0)$-prime submodule of $M$ by \cite[Theorem 1]{lu84}. By assumption of this part, $L=S_{(0)}(\textbf{0})\subseteq \rad(\textbf{0})\subseteq P$. Let $(L:x)\neq(0)$ and let $\{P_{\lambda}\}_{\lambda\in \Lambda}$ be a collection of $\p_{\lambda}$-prime submodules of $M$ such that $L=\bigcap_{\lambda\in \Lambda}P_{\lambda}$. If $\p_k=(0)$ for some $k\in \Lambda$, then $(P_k:M)=(S_{(0)}(\textbf{0}):M)=(0)$. Hence, $L\subseteq P_k=S_{(0)}(\textbf{0})\subseteq \rad(\textbf{0})\subseteq P$. Therefore, we may assume that $\p_\lambda\neq (0)$ for each $\lambda \in \Lambda$.
Since $\dim(R)=1$, we have $\p_{\lambda}=(\p_{\lambda}M:M)=(P_{\lambda}:M)$. Therefore, $\p_{\lambda}M$ is a $\p_{\lambda}$-prime submodule of $M$ by \cite[Proposition 2]{lu84}. By assumption of this part, $P_{\lambda}=\p_{\lambda}M$. This implies that $$(L:x)=(\bigcap_{\lambda\in \Lambda}\p_{\lambda}M:x)=\bigcap_{\lambda\in \Lambda}(\p_{\lambda}M:x).$$ Suppose that $\Lambda'$ be a subset of $\Lambda$ such that for each $\lambda\in\Lambda'$, $x\not\in \p_{\lambda}M$. Since $x\not\in L$, hence $\Lambda'\neq\emptyset$. Now, from the \cite[Lemma~2.12]{mms02}, we have, $$(0)\neq(L:x)=\bigcap_{\lambda\in \Lambda'}(\p_{\lambda}M:x)=\bigcap_{\lambda\in \Lambda'}\p_{\lambda}\subseteq(P:M).$$ By \cite[Proposition 9.1]{ati69}, $(L:x)$ is contained in finitely many prime ideal, i.e., $\Lambda'$ is finite. So, there exists some  $\lambda\in \Lambda'$ such that $\p_{\lambda}\subseteq (P:M)$. Therefore, $L\subseteq P$.
\end{enumerate}
\end{proof}

The next example shows that Part~(1) of Theorem \ref{maintop} is different from Part~(2).

\begin{eg}
Consider the $\mathbb{Z}$-module $M=\mathbb{Z}(p^{\infty})\oplus \mathbb{Z}$. It is easy to see that for every prime ideal $\p\in \Spec(\mathbb{Z})$, $|\Spec_{\p}(M)|\leq 1$ and $S_{(0)}(\textbf{0})= \rad(\textbf{0})$. By Theorem~\ref{maintop}, $M$ is a top module. We note that M is not weak
multiplication.
\end{eg}

\section{SOME TOPOLOGICAL PROPERTIES OF $ q\Spec(M)$}

Let $M$ be an $R$-module. Then for submodules $N$, $L$ and $N_i$ of $M$ we have
\begin{enumerate}
  \item $D(\textbf{0})= q\Spec(M)$ and $D(M) =\emptyset$,
  \item $\bigcap_{i\in I}D (N_i) = D (\sum_{i\in I}(N_i : M)M)$,
  \item $D (N)\cup D (L) = D (N \cap L)$.
\end{enumerate}
Now, we put $$\zeta(M)=\{ \, D(N) \mid N\leq M \, \}.$$
From (1), (2)  and (3) above, it is evident that for any module $M$ there exists a topology, $\tau$ say,
on $ q\Spec(M)$ having $\zeta(M)$ as the family of all closed sets. The topology $\tau$ is called the \emph{developed Zariski
topology on} $ q\Spec(M)$. For the reminder of this paper, for every ideal $I\in D(\Ann(M))$, $\overline{R}$ and $\overline{I}$
will denote respectively $R/\Ann(M)$ and $I/\Ann(M)$. Let $Y$ be a subset of $ q\Spec(M)$ for an $R$-module $M$.
We will denote the intersection of all elements in $Y$ by $\Im(Y )$ and the closure of $Y$ in $ q\Spec(M)$ with respect to the developed Zariski topology by $Cl(Y)$. The proof of next lemma is easy.

\begin{lem}\label{lemtop}
Let $I$ be a proper ideal of $R$ and $M$ be an $R$-module with submodules $N$ and $L$. Then we have
\begin{enumerate}
  \item If $(N : M) = (L : M)$, then $D(N)=D(L)$. The converse is also true if both $N$ and $L$ are quasi-prime submodules of $M$;
  \item $D(N)=\bigcup_{I\in D^R(N:M)} q\Spec_I(M)$;
  \item $D(N)=D((N:M)M)=\Omega^M((N:M)M)$;
  \item Let $Y$ be a subset of $q\Spec(M)$. Then $Y\subseteq D(N)$ if and only if  $(N:M)\subseteq (\Im(Y):M)$.
\end{enumerate}
\end{lem}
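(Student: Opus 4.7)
The proof is routine; the plan is to unwind the definitions of $D(N)$, $\Omega^M$, and $q\Spec_I(M)$ in each part.

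For part (1), the forward direction is immediate: the condition defining membership in $D(N)$ is phrased entirely in terms of $(N:M)$, so replacing $N$ by $L$ with the same colon ideal leaves $D(N)$ unchanged. For the converse under the quasi-prime hypothesis, I would observe that $N \in D(N)$ (since trivially $(N:M) \supseteq (N:M)$) and similarly $L \in D(L)$; applying the equality $D(N)=D(L)$ gives $N \in D(L)$ and $L \in D(N)$, which reads $(N:M)\supseteq (L:M)$ and $(L:M)\supseteq (N:M)$.

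Part (2) is a reindexing. A quasi-prime submodule $P$ lies in $D(N)$ iff $(P:M)\supseteq (N:M)$; since $(P:M)$ is a quasi-prime ideal of $R$, this is exactly saying $(P:M) \in D^R(N:M)$, i.e., $P \in q\Spec_{(P:M)}(M)$ for some ideal in $D^R(N:M)$. For part (3), I would first verify the elementary identity $((N:M)M:M) = (N:M)$ (the inclusion $\subseteq$ is automatic; for $\supseteq$, any $r \in ((N:M)M:M)$ satisfies $rM \subseteq (N:M)M \subseteq N$), then deduce $D(N)=D((N:M)M)$ from part (1). The second equality $D((N:M)M) = \Omega^M((N:M)M)$ follows from the chain $P \supseteq (N:M)M \iff (N:M)\subseteq (P:M)$, valid on the set of quasi-prime submodules.

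Part (4) rests on the identity $(\Im(Y):M) = \bigcap_{P \in Y}(P:M)$, which in turn follows from $rM \subseteq \bigcap P \iff rM \subseteq P$ for each $P \in Y$. Then $Y \subseteq D(N)$ translates to $(N:M)\subseteq (P:M)$ for every $P \in Y$, i.e., $(N:M)\subseteq \bigcap_{P\in Y}(P:M)=(\Im(Y):M)$, and this chain of equivalences is reversible. I don't anticipate a genuine obstacle here; the only point that requires a moment's care is the identity $((N:M)M:M) = (N:M)$ used in part (3), which must be stated explicitly so that part (1) can be invoked.
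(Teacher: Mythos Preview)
Your argument is correct; the paper itself omits the proof, saying only that it is easy. One cosmetic slip in part~(3): your labels on the two inclusions of $((N:M)M:M)=(N:M)$ are swapped---the sentence you attach to ``$\supseteq$'' (take $r\in((N:M)M:M)$, then $rM\subseteq(N:M)M\subseteq N$) actually establishes ``$\subseteq$''---but both directions are one-liners and you clearly have the substance.
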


\begin{prop}\label{cont}\label{opcl}
Let $M$ be an $R$-module and $\psi :  q\Spec(M)\rightarrow q\Spec(R/\Ann(M))$ be the natural map.
\begin{enumerate}
  \item The natural map $\psi$ is continuous with respect to the developed Zariski topology.
  \item If $M$ is quasi-primeful, then $\psi$ is both closed and open; more precisely, for every submodule $N$ of $M$, $ \psi(D^M(N)) =D^{\overline{R}}(\overline{(N : M)})$ and $ \psi( q\Spec(M)-D^M(N)) = q\Spec(\bar{R})-D^{\overline{R}}(\overline{(N : M)})$.
  \item $\psi$ is bijective if and only if it is a homeomorphism.
\end{enumerate}
\end{prop}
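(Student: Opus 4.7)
My approach is to verify all three parts by unwinding definitions, the only non-formal input being the surjectivity of $\psi$ furnished by quasi-primefulness in (2). The key preliminary observation is that the order-preserving bijection between ideals of $R$ containing $\Ann(M)$ and ideals of $\overline{R} = R/\Ann(M)$ makes the map $\psi$, which records $(L:M)$ modulo $\Ann(M)$, equivalent to the assignment $L \mapsto (L:M)$ up to this correspondence. Combined with Lemma~\ref{lemtop}(3), which says that the basic closed sets $D^M(N)$ depend only on $(N:M)$, the proof becomes essentially formal.

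For part (1), I would first identify the closed sets of the developed Zariski topology on $q\Spec(\overline{R})$. Since $\overline{R}$ is viewed as a module over itself, $(\overline{K}:\overline{R}) = \overline{K}$, so the closed sets take the explicit form $D^{\overline{R}}(\overline{J}) = \{\overline{K} \in q\Spec(\overline{R}) : \overline{K} \supseteq \overline{J}\}$ for ideals $\overline{J}$ of $\overline{R}$. Writing $\overline{J} = J/\Ann(M)$ for $J \supseteq \Ann(M)$, the claim is $\psi^{-1}(D^{\overline{R}}(\overline{J})) = D^M(JM)$. Unwinding, $L$ lies in the left-hand side iff $(L:M) \supseteq J$, while it lies in $D^M(JM)$ iff $(L:M) \supseteq (JM:M)$; these are equivalent, since $J \subseteq (JM:M)$ and, conversely, $(L:M) \supseteq J$ implies $JM \subseteq (L:M)M \subseteq L$, hence $(JM:M) \subseteq (L:M)$. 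Continuity follows.

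For part (2), the inclusion $\psi(D^M(N)) \subseteq D^{\overline{R}}(\overline{(N:M)})$ is immediate from the definition and the fact that both $(L:M)$ and $(N:M)$ contain $\Ann(M)$. For the reverse inclusion, given $\overline{J} \supseteq \overline{(N:M)}$, quasi-primefulness produces $L \in q\Spec(M)$ with $\psi(L) = \overline{J}$; the ideal correspondence then lifts this to $(L:M) \supseteq (N:M)$, placing $L$ in $D^M(N)$. The formula for complements is proven by the same two-step argument, with surjectivity again supplying a preimage of any $\overline{J}$ lying outside $D^{\overline{R}}(\overline{(N:M)})$. Since the closed (resp. open) sets of $q\Spec(M)$ are exactly the sets $D^M(N)$ (resp. their complements), these two formulas say precisely that $\psi$ is closed and open.

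For part (3), one direction is trivial: a homeomorphism is bijective. Conversely, if $\psi$ is bijective then it is surjective, so $M$ is quasi-primeful and part (2) applies, making $\psi$ a closed map; combined with the continuity of (1), a continuous closed bijection is a homeomorphism. I do not anticipate a real obstacle beyond the bookkeeping around the correspondence between ideals of $R$ containing $\Ann(M)$ and ideals of $\overline{R}$, which underpins every step; once this correspondence is made explicit, each part reduces to a short computation.
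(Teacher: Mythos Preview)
Your proposal is correct and follows essentially the same approach as the paper: both establish $\psi^{-1}(D^{\overline{R}}(\overline{J})) = D^M(JM)$ by unwinding definitions for part~(1), use surjectivity for part~(2), and combine the two for part~(3). The only cosmetic difference is that in part~(2) the paper exploits the preimage formula from~(1) together with the identity $\psi(\psi^{-1}(A)) = A$ for surjective $\psi$, whereas you verify the two inclusions by hand; the content is the same.
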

\begin{proof}
(1) Let $I$ be an ideal of $R$ containing $\Ann(M)$ and let $L\in \psi^{-1}(D^{\overline{R}}(\bar{I}))$. There exists some $\bar{J}\in D^{\overline{R}}(\bar{I})$ such that $ \psi(L)=\bar{J}$. Hence, $J=(L:M)\supseteq I$ and $L\in D^M (IM)$. Now, let $K\in D^M (IM)$. Then $(K:M)\supseteq(IM:M)\supseteq I$, and so $K\in \psi^{-1}(D^{\overline{R}}(\bar{I}))$. Consequently, $\psi^{-1}(D^{\overline{R}}(\bar{I})) = D^M(IM)$, i.e., $\psi$ is continuous. (2) By part~(1), $\psi$ is a continuous map such that $\psi^{-1}(D^{\overline{R}}(\bar{I})) = D^M (IM)$ for every ideal $I$ of $R$ containing $\Ann(M)$. Hence, for every submodule $N$ of $M$, $\psi^{-1}(D^{\overline{R}}(\overline{(N : M)})) = D^M((N : M)M) = D^M (N)$. Since the natural map $\psi$ is surjective, $ \psi(D^M(N)) =\psi o \psi^{-1}(D ^{\overline{R}}(\overline{(N : M)})) = D^{\overline{R}}(\overline{(N : M)})$. Similarly, $ \psi( q\Spec(M)-D^M(N)) = q\Spec(\bar{R})-D^{\overline{R}}(\overline{(N : M)})$. (3) This follows from (1) and (2).
\end{proof}

\begin{thm}
Let $M$ be a quasi-primeful $R$-module. Then the following statements are equivalent:
\begin{enumerate}
  \item $ q\Spec(M)$ is connected;
  \item $ q\Spec(\bar{R})$ is connected;
  \item The ring $\bar{R}$ contains no idempotent other than $\bar{0}$ and $\bar{1}$.
\end{enumerate}
\noindent Consequently, if $R$ is a quasi-local ring, then both $q\Spec(M)$ and $ q\Spec(\bar{R})$ are connected.
\end{thm}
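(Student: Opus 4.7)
The plan is to pivot on the natural map $\psi : q\Spec(M) \to q\Spec(\bar{R})$, which by Proposition~\ref{opcl} is continuous, surjective, open and closed whenever $M$ is quasi-primeful; this makes $\psi$ a topological quotient map that transfers connectedness in both directions. Concretely, $(1) \Rightarrow (2)$ is immediate because the continuous image of a connected space under a surjection is connected. For $(2) \Rightarrow (1)$, suppose $q\Spec(M) = A \sqcup B$ is a nontrivial partition into clopen sets. Writing $A = D^M(N_1)$ and $B = D^M(N_2)$, Proposition~\ref{opcl} identifies $\psi(A) = D^{\bar{R}}(\overline{(N_1:M)})$ and $\psi(B) = D^{\bar{R}}(\overline{(N_2:M)})$, both closed (since $\psi$ is closed), open (since $\psi$ is open), nonempty by surjectivity, and jointly covering $q\Spec(\bar{R})$. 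The crux is disjointness: if some $\bar{J}$ lies in both images, then $J \supseteq (N_1:M) + (N_2:M)$, and quasi-primefulness produces some $L \in q\Spec(M)$ with $(L:M)=J$, placing $L$ in $A \cap B = \emptyset$, a contradiction.

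The equivalence $(2) \Leftrightarrow (3)$ is purely ring-theoretic and mirrors the classical argument for $\Spec$. For $(3) \Rightarrow (2)$, a nontrivial idempotent $\bar{e} \in \bar{R}$ yields the closed sets $F_1 = \{L \in q\Spec(\bar{R}) : \bar{e} \in L\}$ and $F_2 = \{L \in q\Spec(\bar{R}) : 1-\bar{e} \in L\}$. Both are nonempty because $\bar{e}$ and $1-\bar{e}$ are nonunits (otherwise the idempotent relation forces $\bar{e} \in \{\bar{0},\bar{1}\}$) and therefore lie in some maximal ideal, which is quasi-prime by Remark~\ref{6}. They are trivially disjoint (a common element would contain $\bar{1}$), and they cover $q\Spec(\bar{R})$ because $(\bar{e}) \cap (1-\bar{e}) = (\bar{0})$ is contained in every quasi-prime $L$, and the quasi-prime property then forces $(\bar{e}) \subseteq L$ or $(1-\bar{e}) \subseteq L$. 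Conversely, for $(2) \Rightarrow (3)$, any disconnection $q\Spec(\bar{R}) = F_1 \sqcup F_2$ into nonempty closed sets $F_i = D^{\bar{R}}(I_i)$ restricts to a disconnection of the subspace $\Spec(\bar{R}) \subseteq q\Spec(\bar{R})$: each $I_i$ must be proper (else $F_i = \emptyset$), hence contained in some maximal ideal, which is prime and guarantees that $F_i \cap \Spec(\bar{R})$ is nonempty. The classical fact that $\Spec$ of a commutative ring is connected precisely when the ring has no nontrivial idempotent then supplies such an idempotent in $\bar{R}$.

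For the consequence, if $R$ is quasi-local then so is $\bar{R} = R/\Ann(M)$, and a quasi-local ring admits no nontrivial idempotent (otherwise both $\bar{e}$ and $1-\bar{e}$ would be nonunits lying in the unique maximal ideal, forcing $\bar{1} \in \m$). Hence (3) holds automatically and the theorem delivers both connectedness statements. I expect the most delicate step to be the disjointness of $\psi(A)$ and $\psi(B)$ in the proof of $(2) \Rightarrow (1)$: without quasi-primefulness one cannot lift an arbitrary $\bar{J} \in q\Spec(\bar{R})$ back to $q\Spec(M)$, and the whole argument collapses. Everything else is formal once one accepts the openness and closedness of $\psi$ from Proposition~\ref{opcl} and the classical connectedness principle for $\Spec$.
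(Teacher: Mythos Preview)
Your argument is correct, and the treatment of $(1) \Leftrightarrow (2)$ parallels the paper's closely: both exploit that $\psi$ is closed and open via Proposition~\ref{opcl}, and your explicit disjointness check for $\psi(A) \cap \psi(B)$ is equivalent to the paper's verification that $\psi(Y)$ remains a proper subset of $q\Spec(\bar{R})$. The main divergence is in $(2) \Leftrightarrow (3)$: the paper works entirely inside $q\Spec(\bar{R})$ and constructs the idempotent by hand from a disconnection $q\Spec(\bar{R}) = D^{\bar{R}}(I) \cup D^{\bar{R}}(J)$, deducing $I+J=\bar{R}$ and $IJ \subseteq \sqrt{(0)}$, and then manufacturing an idempotent from suitable powers of a pair $a+b=1$; you instead restrict the disconnection to the subspace $\Spec(\bar{R})$ and invoke the classical equivalence for $\Spec$. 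Your reduction is legitimate and shorter, though it imports an external result the paper chooses to reprove. One cosmetic slip: you have swapped the labels for the two halves of $(2) \Leftrightarrow (3)$ --- the argument you present under ``$(3) \Rightarrow (2)$'' (nontrivial idempotent $\Rightarrow$ disconnected) is actually the contrapositive of $(2) \Rightarrow (3)$, and vice versa.
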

\begin{proof}
$(1) \Rightarrow (2)$ follows from that $\psi$ is a surjective and continuous.

For $(2) \Rightarrow (1)$, we assume that $ q\Spec(\bar{R})$ is connected. If $ q\Spec(M)$ is disconnected, then $ q\Spec(M)$ must contain a non-empty proper subset $Y$ that is both open and closed. Accordingly, $ \psi(Y)$ is a non-empty subset of $ q\Spec(\bar{R})$ that is both open and closed by Proposition \ref{opcl}. To complete the proof, it suffices to show that $ \psi(Y)$ is a proper subset of $ q\Spec(\bar{R})$ so that $ q\Spec(\bar{R})$ will be disconnected.

Since $Y$ is open, $Y =  q\Spec(M)- D^M(N)$ for some $N\leq M$ whence $ \psi(Y) = q\Spec(\bar{R})-D^{\overline{R}}(\overline{(N : M)})$ by Proposition \ref{opcl} again. Therefore, if $ \psi(Y) =  q\Spec(\bar{R})$, then $D^{\overline{R}}(\overline{(N : M)})=\emptyset$, and so $\overline{(N : M)}=\bar{R}$, i.e., $N = M$. It follows that $Y =  q\Spec(M)-D^M(N) = q\Spec(M)-D^M(M)=  q\Spec(M)$ which is impossible. Thus $ \psi(Y)$ is a proper subset of $ q\Spec(\bar{R})$.

For $(2) \Leftrightarrow (3)$, it is enough for us to show that $q\Spec(R)$ is disconnected if and only if $R$ has an idempotent $e \neq 0 ,1$. Suppose that $e \neq 0 ,1$ is an idempotent in $R$. Hence $R=Re\oplus R(1-e)$. It follows that $q\Spec(R)=(q\Spec(R)\setminus D^R(Re))\cup (q\Spec(R)\setminus D^R(R(1-e)))$ and $\emptyset=(q\Spec(R)\setminus D^R(Re))\cap (q\Spec(R)\setminus D^R(R(1-e)))$. This implies that $q\Spec(R)$ is disconnected. Now, we assume that $q\Spec(R)$ is disconnected. Thus $q\Spec(R)=D^R(I)\cup D^R(J)$ where $I$ and $J$ are two ideals of $R$. We have that $q\Spec(R)=D^R(I\cap J)$ and so, $I\cap J\subseteq \Im(q\Spec(R))$. Also, $\emptyset=D^R(I)\cap D^R(J)=D^R(I+ J)$. This implies that $I+J=R$. There exist $a\in I$ and $b\in J$ such that $a+b=1$. On the other hand, $$ab\in IJ\subseteq I\cap J\subseteq \Im(q\Spec(R))\subseteq\sqrt{(0)}.$$ So, $(ab)^n=0$ for some $n\in \mathbb{N}$. We have $1=(a+b)^n=a^n+b^n+abx$ where $x\in R$. Since $abx\in \sqrt{(0)}\subseteq \Rad(R)$, $a^n+b^n$ is a unit in $R$. Let $u$ be the inverse of $a^n+b^n$. Note that $ua^nb^n=0$. Thus $$ua^n=ua^n(u(a^n+b^n))=u^2a^{2n}+u^2a^nb^n=(ua^n)^2.$$ Similarly, $ub^n=(ub^n)^2$. If $ua^n=0$, then $a^n=0$, and so $1=b(b^{n-1}+ax)\in J$ which is contradiction because $D^R(J)\neq \emptyset$. Consequently, $ua^n$ and $ub^n$ are nonzero. On the other hand, if $ua^n=ub^n=1$, then $1=u(a^n+b^n)=ua^n+ub^n=1+1$, which is contradiction. We conclude that either $ua^n$ or $ub^n$ is idempotent.
\end{proof}

\begin{prop}\label{Cl}\label{Cl2}\label{t1}
Let $M$ be an $R$-module, $Y\subseteq  q\Spec(M)$ and let $L\in q\Spec_I(M)$.
\begin{enumerate}
  \item $D(\Im(Y))=Cl(Y)$. In particular $Cl(\{L\})=D (L)$;
  \item Let $M$ be a semiprimitive (resp. reduced) $R$-module and $\Max(M)$ (resp. $\Spec(M)$) be a non-empty connected subspace of $q\Spec(M)$. Then $q\Spec(M)$ is connected;
  \item If $(\textbf{0})\in Y$, Then $Y$ is dense in $q\Spec(M)$.
  \item The set $\{L\}$ is closed in $ q\Spec(M)$ if and only if
  \begin{enumerate}
    \item $I$ is a maximal element in $\{(N:M) | N\in  q\Spec(M)\}$, and
    \item $q\Spec_I(M)=\{L\}$.
    \end{enumerate}
  \item If $\{L\}$ is closed in $q\Spec(M)$, then $L$ is a maximal element of $q\Spec(M)$ and $q\Spec_I(M)=\{L\}$.
  \item $M$ is quasi-prime-embedding if and only if $q\Spec(M)$ is a $T_0$-space.
    \item $q\Spec(M)$ is a $T_1$-space if and only if $ q\Spec(M)$ is a $T_0$-space and for every element $L\in q\Spec(M)$, $(L:M)$ is a maximal element in $\{(N:M)~|~N\in q\Spec(M)\}.$
  \item If $q\Spec(M)$ is a $T_1$-space, then $q\Spec(M)$ is a $T_0$-space and every quasi-prime submodule is a maximal element of $q\Spec(M)$. The converse is also true, when $M$ is finitely generated.
  \item Let $(\textbf{0})\in q\Spec(M)$. Then $q\Spec(M)$ is a $T_1$-space if and only if $(\textbf{0})$ is the only quasi-prime submodule of $M$.
  \end{enumerate}
\end{prop}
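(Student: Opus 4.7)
The plan is to work through the nine parts in order, using Lemma~\ref{lemtop}(4) as the workhorse for translating topological inclusions into inclusions of colon ideals. Parts (1)--(3) are essentially formal manipulations. For (1) I would show that $D(\Im(Y))$ is the smallest closed set containing $Y$: the inclusion $Y \subseteq D(\Im(Y))$ is immediate, while any $Y \subseteq D(N)$ gives $(N:M) \subseteq (\Im(Y):M)$ by Lemma~\ref{lemtop}(4), hence $D(\Im(Y)) \subseteq D(N)$; specialising to $Y=\{L\}$ yields $Cl(\{L\}) = D(L)$. Part (3) is then immediate since $(\textbf{0}) \in Y$ forces $\Im(Y) = \textbf{0}$ and $D(\textbf{0}) = q\Spec(M)$. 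Part (2) combines (1) with semiprimitivity (so $\Im(\Max(M)) = \Rad(M) = \textbf{0}$) or reducedness (so $\Im(\Spec(M)) = \rad(\textbf{0}) = \textbf{0}$) to see that the named subspace is dense, and density plus connectedness of the subspace force connectedness of $q\Spec(M)$.

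The heart of the proposition is (4)--(7), unified by the observation that $\{L\}$ is closed iff $\{L\} = Cl(\{L\}) = D(L)$. Unpacking $D(L) = \{L\}$ directly gives the two conditions of (4): no quasi-prime $L' \neq L$ can have $(L':M) \supsetneq (L:M)$, forcing $I$ to be maximal in the colon poset; and no other quasi-prime can share the colon $I$, forcing $q\Spec_I(M) = \{L\}$. Part (5) is a quick consequence: any quasi-prime $L'$ with $L \subseteq L'$ has $(L':M) \supseteq I$, so maximality gives $(L':M) = I$ and then $L' \in q\Spec_I(M) = \{L\}$. For (6), $T_0$ is equivalent to $Cl(\{L\}) \neq Cl(\{L'\})$ for distinct $L, L'$, which by (1) together with Lemma~\ref{lemtop}(1) is the same as $(L:M) \neq (L':M)$, i.e., injectivity of $\psi$, which is the quasi-prime-embedding condition from Proposition~\ref{t0inj}. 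Part (7) is then obtained by demanding (4) for every $L$.

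The delicate step, where I expect the main obstacle, is the converse direction of (8). Assuming $T_0$ and that every quasi-prime is a maximal element of $q\Spec(M)$, I would verify the colon-maximality required by (7) by contradiction. Given quasi-primes $L, N$ with $(L:M) \subsetneq (N:M)$, the key move is to consider $L \cap N$: since $L \subsetneq M$ this is proper, and a direct calculation gives $(L \cap N : M) = (L:M) \cap (N:M) = (L:M)$, a quasi-prime ideal, so $L \cap N$ is itself a quasi-prime submodule lying in $q\Spec_{(L:M)}(M)$. The $T_0$ hypothesis combined with Proposition~\ref{t0inj} collapses this set to $\{L\}$, forcing $L \cap N = L$, hence $L \subseteq N$; maximality of $L$ then gives $L = N$, contradicting $(L:M) \neq (N:M)$. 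Part (9) falls out as a corollary: if $(\textbf{0})$ is quasi-prime and $q\Spec(M)$ is $T_1$, then (8) makes $(\textbf{0})$ a maximal element, but since it is contained in every submodule the only quasi-prime is $(\textbf{0})$; conversely, a singleton space is trivially $T_1$. The conceptual hurdle in (8) is recognising that the intersection trick bridges the two different notions of maximality (on colons and on submodules) without needing any finer structural properties of $M$ beyond what $T_0$ and the antichain hypothesis supply.
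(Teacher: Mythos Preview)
Your argument is correct throughout, and for parts (1)--(7) and (9) it is essentially identical to the paper's proof.

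For the converse direction of (8), however, you take a genuinely different route. The paper uses the finitely generated hypothesis in an essential way: since $M$ is finitely generated, every proper submodule lies in a maximal submodule, so a quasi-prime $L$ that is a maximal element of $q\Spec(M)$ must itself be a maximal submodule; then $(L:M)$ is a maximal ideal of $R$, and colon-maximality in (7) follows immediately. Your intersection trick bypasses this entirely: given $(L:M)\subsetneq (N:M)$ you form $L\cap N$, observe $(L\cap N:M)=(L:M)\cap(N:M)=(L:M)$ so that $L\cap N\in q\Spec_{(L:M)}(M)$, and then the $T_0$ hypothesis via Proposition~\ref{t0inj} forces $L\cap N=L$, hence $L\subseteq N$ and maximality in $q\Spec(M)$ gives the contradiction. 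This argument is valid and nowhere uses that $M$ is finitely generated, so you have in fact proved more than the proposition states: the converse of (8) holds for arbitrary $M$. The paper's approach buys a more direct connection to maximal submodules (which is exploited later, e.g.\ in Remark~\ref{remmax} and Theorem~\ref{t1multi}), while yours buys a sharper statement.
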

\begin{proof}
\begin{enumerate}
  \item Clearly, $Y\subseteq D(\Im(Y ))$. Next, let $D(N)$ be any closed subset of $q\Spec(M)$
containing $Y$. Then $(L : M) \supseteq (N : M)$ for every $L\in Y$ so that $(\Im(Y ) : M)\supseteq (N : M)$. Hence, for every $Q\in D(\Im(Y ))$, $(Q : M)\supseteq (\Im(Y ) : M) \supseteq (N : M)$, namely $D (\Im(Y )) \subseteq D (N)$. This proves that $D (\Im(Y ))$ is the smallest closed subset of $ q\Spec(M)$ containing $Y$, hence $D (\Im(Y )) =Cl(Y )$.

  \item Let $M$ be reduced. Then by (1), we have $Cl(\Spec(M))=D(\Im(\Spec(M)))=D(\textbf{0})=q\Spec(M)$. Therefore, $q\Spec(M)$ is connected by \cite[p.150, Theorem 23.4]{mk99}. A similar proof is true for semiprimitive modules.

  \item is clear by (1).

  \item Suppose that $\{L\}$ is closed. Then $\{L\}=D(L)$ by (1). Let $N\in q\Spec(M)$ such that $(L:M)\subseteq (N:M)$. Hence, $N\in D(L)= \{L\}$, and so $q\Spec_I(M)=\{L\}$, where $I=(L:M)$. On the other hand we assume that (a) and (b) hold. Let $N\in Cl(\{L\})$. Hence, $(N:M)\supseteq (L:M)$ by~(1). By (a), $(N:M)=(L:M)$. So, $L=N$ by (b). This yields  $Cl(\{L\})=\{L\}$.

  \item Let $P\in q\Spec(M)$ such that $L\subseteq P$. Then $(L:M)\subseteq (P:M)$. i.e., $P\in D(L)=Cl(\{L\})=\{L\}$. Hence, $P=L$, and so $L$ is a maximal element of $q\Spec(M)$.

  \item We recall that a topological space is  $T_0$ if and only if the closures of distinct points are distinct. Now, the result follows from part~(1) and Proposition~\ref{t0inj}.

  \item We recall that a topological space is  $T_1$ if and only if every singleton subset is closed. The result follows from (4), (5) and (6).

  \item Trivially, $q\Spec(M)$ is a $T_0$-space and every it's singleton subset is closed. Every quasi-prime submodule is a maximal element of $q\Spec(M)$ by (5). Now, we suppose that $M$ is finitely generated. Thus, every quasi-prime submodule is maximal. Let $N\in q\Spec(M)$ such that $N\in Cl(\{L\})=D(L)$. Since $L$ is maximal, $(L:M)=(N:M)$. By Proposition~\ref{t0inj}, $N=L$. Hence, every singleton subset of $q\Spec(M)$ is closed. So, $q\Spec(M)$ is a $T_1$-space.

  \item Use part (8).
    \end{enumerate}
\end{proof}

\begin{eg}
Consider the $\mathbb{Z}$-module $M=\bigoplus_{p} \mathbb{Z}/{p\mathbb{Z}}$, where $p$ runs through the set of all prime integers. We will show that $q\Spec(M)$ is not a $T_1$-space. Note that $(\textbf{0}:M)=\Ann(M)=(0)$. Hence, $(\textbf{0})\in q\Spec(M)$. On the other hand, for each quasi-prime ideal $I$ of $\mathbb{Z}$, we have $(IM:M)=\sqrt{I}\in q\Spec(\mathbb{Z})$. So, $q\Spec(M)$ is infinite  and  $q\Spec(M)$ is not a $T_1$-space by Proposition~\ref{Cl}.
\end{eg}

\begin{rem}\label{remmax}
Let $M$ be a finitely generated (or co-semisimple) $R$-module. Since every quasi-prime submodule is contained in a maximal submodule, $q\Spec(M)$ is a $T_1$-space if and only if $q\Spec(M)$ is a $T_0$-space and $q\Spec(M)=\Max(M)$. Since $q\Spec(R)$ is always a $T_0$-space (see \cite[Theorem~4.1]{az08}), we have $q\Spec(R)$ is a $T_1$-space if and only if  $q\Spec(R)=\Max(R)$. If $R$ is absolutely flat, then by \cite[Theorem~2.1]{az08}, $q\Spec(R)=\Spec(R)=\Max(R)$. Therefore, $q\Spec(R)$ is a $T_1$-space. It is clear that if $M$ is free, then $q\Spec(M)$ is a $T_1$-space if and only if $M$ is isomorphic to $R$ and $q\Spec(R)$ is a $T_1$-space.
\end{rem}

\begin{thm}\label{t1multi}
Let $M$ be a finitely generated $R$-module. The following statements are equivalent
\begin{enumerate}
  \item $q\Spec(M)$ is a $T_1$-space.
  \item $M$ is a multiplication module and $q\Spec(M)=\Max(M)$.
\end{enumerate}
\end{thm}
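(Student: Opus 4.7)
The plan is to reduce the theorem to results already established, since Proposition~\ref{Cl}(8), Proposition~\ref{Cl}(6), Corollary~\ref{maincor}, and Remark~\ref{remmax} together supply all the machinery. No new computation is really required; the content is a matter of assembling the right implications.

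For the direction $(2)\Rightarrow(1)$, I would argue as follows. Since $M$ is multiplication, Corollary~\ref{maincor} yields that $M$ is quasi-prime-embedding, and hence $q\Spec(M)$ is $T_0$ by Proposition~\ref{Cl}(6). By assumption $q\Spec(M)=\Max(M)$, so every quasi-prime submodule of $M$ is maximal, in particular a maximal element of $q\Spec(M)$. Invoking Proposition~\ref{Cl}(8) (its converse part, applicable because $M$ is finitely generated) then gives that $q\Spec(M)$ is $T_1$.

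For $(1)\Rightarrow(2)$, I would start from Proposition~\ref{Cl}(8), whose forward direction tells us that $q\Spec(M)$ being $T_1$ forces $q\Spec(M)$ to be $T_0$ and every quasi-prime submodule to be a maximal element of the poset $q\Spec(M)$. Now the $T_0$ condition, combined with Proposition~\ref{Cl}(6), says $M$ is quasi-prime-embedding; since $M$ is finitely generated, the equivalences in Corollary~\ref{maincor} (whose $(5)\Rightarrow(1)$ is available under finite generation) upgrade this to $M$ being multiplication. It remains to show $q\Spec(M)=\Max(M)$. The inclusion $\Max(M)\subseteq q\Spec(M)$ is Remark~\ref{remaux}(1). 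For the reverse, use that $M$ is finitely generated so, as noted in Remark~\ref{remmax}, every quasi-prime submodule of $M$ is contained in some maximal submodule; maximal submodules are themselves quasi-prime, and every quasi-prime submodule is a maximal element of $q\Spec(M)$, so any quasi-prime submodule must coincide with the maximal submodule containing it.

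The only mildly delicate point is confirming that ``maximal element of $q\Spec(M)$'' translates to ``maximal submodule of $M$'' here; this is exactly where finite generation enters, via the standard fact that proper submodules of finitely generated modules sit inside maximal submodules. Everything else is bookkeeping of implications, so I do not expect any real obstacle.
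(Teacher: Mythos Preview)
Your proposal is correct and follows essentially the same route as the paper, which simply cites Corollary~\ref{maincor}, Remark~\ref{remmax}, and Proposition~\ref{Cl}(6). The only cosmetic difference is that you invoke Proposition~\ref{Cl}(8) and then argue explicitly that ``maximal element of $q\Spec(M)$'' coincides with ``maximal submodule'' under finite generation, whereas the paper packages that step inside Remark~\ref{remmax}; the underlying logic is identical.
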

\begin{proof}
Use Corollary~\ref{maincor}, Remark~\ref{remmax} and Proposition \ref{Cl}(6).
\end{proof}

\begin{cor}
Let $M$ be an $R$-module.
\begin{enumerate}
  \item Let $R$ be an integral domain. If $q\Spec(R)$ is a $T_1$-space, then $R$ is a field.
  \item If $M$ is Noetherian and $q\Spec(M)$ is a $T_1$-space, then $M$ is Artinian cyclic.
\end{enumerate}
\end{cor}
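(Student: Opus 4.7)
For part (1), the plan is to invoke Proposition~\ref{Cl}(9) directly on $R$ viewed as a module over itself. Since $R$ is a domain, $(\textbf{0})$ is a prime, hence quasi-prime, ideal by Remark~\ref{6}(3), so $(\textbf{0})\in q\Spec(R)$. The $T_1$ hypothesis then forces $(\textbf{0})$ to be the only quasi-prime ideal of $R$; since every maximal ideal is quasi-prime, $(\textbf{0})$ is the unique maximal ideal and therefore $R$ is a field.

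For part (2), I first apply Theorem~\ref{t1multi}: since $M$ is Noetherian (hence finitely generated) and $q\Spec(M)$ is $T_1$, this gives that $M$ is a multiplication module with $q\Spec(M)=\Max(M)$. Set $\bar R := R/\Ann(M)$. The strategy is to show $\bar R$ is Artinian; once this is done, Artinianness of $M$ is immediate because $M$ is finitely generated over $\bar R$, and cyclicity will follow from a separate semilocal argument. For the Krull dimension, take any $\p\in V(\Ann(M))$: by the standard ideal--submodule correspondence for finitely generated faithful multiplication modules, $\p M$ is a prime submodule of $M$ with $(\p M:M)=\p$. Then $\p M\in\Spec(M)\subseteq q\Spec(M)=\Max(M)$, so $\p M$ is maximal in $M$, and hence its colon $\p$ is a maximal ideal of $R$; thus $\dim\bar R=0$. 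For Noetherianness, the same correspondence $I\mapsto IM$ gives an order-preserving bijection between ideals of $\bar R$ and submodules of $M$, so the ACC transports from $M$ to $\bar R$. Combining, $\bar R$ is Noetherian of Krull dimension zero, hence Artinian, and therefore $M$ is Artinian.

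For cyclicity, I use that $\bar R$, being Artinian, is semilocal. By Theorem~\ref{freet0}(8) the finitely generated multiplication module $M$ is locally cyclic, and by \cite[Lemma~3]{Bar81} (already invoked in the proof of Theorem~\ref{freet0}(7)), a locally cyclic module over a semilocal ring is cyclic. Hence $M$ is cyclic. The step most likely to require care is the invocation of the ideal--submodule correspondence for finitely generated faithful multiplication modules, used both to conclude that $\p M$ is prime with $(\p M:M)=\p$ and to transport the ACC from $M$ to $\bar R$; these are classical and are consistent with Proposition~\ref{paracor}(1), modulo the preliminary observation that a finitely generated multiplication module is quasi-primeful.
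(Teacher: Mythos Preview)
Your proof is correct. Part~(1) is essentially the paper's argument: the paper invokes Remark~\ref{remmax} to get $q\Spec(R)=\Max(R)$ and then notes $(0)\in q\Spec(R)$, while you invoke Proposition~\ref{Cl}(9) to get $q\Spec(R)=\{(0)\}$ and then note that maximal ideals are quasi-prime; these are the same idea phrased through different citations.

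Part~(2) is where the approaches diverge. The paper, after Theorem~\ref{t1multi}, simply quotes two external results: \cite[Theorem~4.9]{beh06} (a Noetherian multiplication module in which every prime submodule is maximal is Artinian) and \cite[Corollary~2.9]{zb88} (an Artinian multiplication module is cyclic). You instead work through the structure of $\bar R=R/\Ann(M)$: using the ideal--submodule bijection $I\mapsto IM$ for finitely generated faithful multiplication modules, you transport the ACC and the condition $\Spec(M)=\Max(M)$ back to $\bar R$ to conclude that $\bar R$ is Noetherian of Krull dimension zero, hence Artinian and semilocal; Artinianness of $M$ then follows, and cyclicity comes from the locally-cyclic-over-semilocal argument via Theorem~\ref{freet0}(8) and \cite[Lemma~3]{Bar81}. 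Your route is longer but more self-contained, avoiding the black-box citation of \cite{beh06}; the paper's route is quicker but leans on that external theorem. Both are valid, and your caution about the ideal--submodule correspondence is well placed---it is exactly the El-Bast--Smith result for finitely generated faithful multiplication modules, and it carries the argument without needing the quasi-primeful detour you mention at the end.
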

\begin{proof}
(1) By Remark~\ref{remmax}, we have $q\Spec(R)=\Max(R)$. But $(0)\in q\Spec(R)$ by assumption. Hence, $R$ is a field. (2) By Theorem~\ref{t1multi}, $M$ is multiplication and every prime submodule of $M$ is maximal. By \cite[Theorem 4.9]{beh06}, $M$ is Artinian. The result follows from \cite[Corollary 2.9]{zb88}.
\end{proof}

A topological space $X$ is said to be \emph{irreducible} if $X \neq \emptyset$ and if every pair of non-empty open sets in $X$ intersect, or equivalently if every non-empty open set is dense in $X$. A topological space $X$ is irreducible if for any decomposition $X=A_1\cup A_2$ with closed subsets $A_i$ of $X$ with $i = 1, 2$, we have $A_1 = X$ or $A_2 = X$. A subset $Y$ of $X$ is irreducible if it is irreducible as a subspace of $X$. An irreducible component of a topological space $A$ is a maximal irreducible subset of $X$.

 Both of  a singleton subset and its closure  in  $q\Spec(M)$ are irreducible. Now, applying (1) of Proposition \ref{Cl2}, we obtain that

\begin{cor}\label{cor4}
$D(L)$ is an irreducible closed subset of $ q\Spec(M)$ for every quasi-prime submodule $L$ of $M$.
\end{cor}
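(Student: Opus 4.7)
The plan is to exploit the two observations flagged in the paragraph immediately before the corollary: that singletons (and hence their closures) are irreducible in any topological space, and that in $q\Spec(M)$ the closure of a singleton is exactly $D(L)$ by Proposition~\ref{Cl2}(1). So my proof will be essentially a one-liner, but let me spell out the scaffolding I would include.

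First, I would note that $D(L)$ is closed in the developed Zariski topology by construction, since $\zeta(M)=\{D(N)\mid N\leq M\}$ is declared to be the family of closed sets of $\tau$. This disposes of the ``closed'' part of the claim without any work.

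Next, for irreducibility, the key step is the standard topological fact that the closure $Cl(Y)$ of an irreducible subset $Y$ of a topological space is again irreducible. I would invoke this with $Y=\{L\}$, which is trivially irreducible (a non-empty singleton cannot be written as the union of two proper closed subsets of itself). Then Proposition~\ref{Cl2}(1), specialized to $Y=\{L\}$ (which requires $L\in q\Spec(M)$ so that $\Im(\{L\})=L$ is itself quasi-prime and $D(\Im(\{L\}))=D(L)$), yields $Cl(\{L\})=D(L)$. Combining these two facts gives that $D(L)$ is irreducible.

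There is no real obstacle here; the only point that deserves a sentence of justification is the standard ``closure of an irreducible set is irreducible'' lemma, which one can either cite from a general topology reference or verify inline in two lines: if $D(L)=A_1\cup A_2$ with $A_i$ closed in $D(L)$ and both proper, then $\{L\}=(\{L\}\cap A_1)\cup(\{L\}\cap A_2)$ forces $L\in A_i$ for some $i$, whence $D(L)=Cl(\{L\})\subseteq A_i$, a contradiction. This short inline verification is what I would include, together with the appeal to Proposition~\ref{Cl2}(1), to keep the corollary self-contained.
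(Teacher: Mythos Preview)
Your proposal is correct and follows exactly the approach the paper intends: the sentence preceding the corollary already records that a singleton and its closure are irreducible, and then invokes Proposition~\ref{Cl2}(1) to identify $Cl(\{L\})$ with $D(L)$. Your write-up simply makes these steps explicit (and adds the trivial observation that $D(L)$ is closed by definition), so there is nothing to correct or contrast.
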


\begin{thm}\label{irrsub}
Let $M$ be an $R$-module and $Y \subseteq  q\Spec(M)$. Then $\Im(Y)$ is a quasi-prime submodule of $M$ if and only if $Y$ is an irreducible space.
\end{thm}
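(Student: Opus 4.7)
The plan is to bridge the algebraic statement about $\Im(Y)$ with the topological statement about $Y$ by using Proposition~\ref{Cl}(1) (which identifies $Cl(Y)$ with $D(\Im(Y))$) together with Corollary~\ref{cor4}. First I would record the elementary identity
\[
(\Im(Y):_R M) \;=\; \bigcap_{L\in Y}(L:_R M),
\]
which follows immediately by unwinding both sides: $r \in (\Im(Y):M)$ iff $rM\subseteq L$ for every $L\in Y$.

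For the forward direction, assume $\Im(Y)$ is a quasi-prime submodule. Then Corollary~\ref{cor4} applied with $L=\Im(Y)$ gives that $D(\Im(Y))$ is an irreducible closed subset of $q\Spec(M)$. But $Cl(Y)=D(\Im(Y))$ by Proposition~\ref{Cl}(1), so $Y$ has irreducible closure; since a subspace of a topological space is irreducible if and only if its closure is irreducible, $Y$ itself is irreducible.

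For the converse, I would verify the defining property of a quasi-prime submodule directly for $\Im(Y)$. Irreducibility forces $Y\neq\emptyset$, and since each $L\in Y$ is proper, $\Im(Y)\subseteq L\subsetneq M$, so $\Im(Y)$ is a proper submodule. Now suppose ideals $A,B$ of $R$ satisfy $A\cap B \subseteq (\Im(Y):M)$. For every $L\in Y$, the chain $A\cap B \subseteq (\Im(Y):M)\subseteq (L:M)$ together with the fact that $(L:M)$ is quasi-prime forces $A\subseteq (L:M)$ or $B\subseteq (L:M)$; equivalently, $L\in D(AM)\cup D(BM)$. Thus
\[
Y \;=\; \bigl(Y\cap D(AM)\bigr)\cup\bigl(Y\cap D(BM)\bigr)
\]
is a decomposition of $Y$ into two subsets that are relatively closed (being intersections of $Y$ with closed sets in the developed Zariski topology). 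Irreducibility of $Y$ then forces one of them, say $Y\cap D(AM)$, to equal $Y$. Hence $A\subseteq (L:M)$ for every $L\in Y$, and the identity of the first paragraph yields $A\subseteq (\Im(Y):M)$. Therefore $(\Im(Y):M)$ is a quasi-prime ideal, i.e., $\Im(Y)$ is a quasi-prime submodule of $M$.

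The main delicate point I anticipate is simply to notice that the \emph{quasi-primeness} (rather than primeness) of each $(L:M)$ is exactly the ingredient that converts the inclusion $A\cap B\subseteq (L:M)$ into the set-theoretic cover $L\in D(AM)\cup D(BM)$; once this is observed, the remainder of the argument is bookkeeping with the subspace topology and the closure formula $Cl(Y)=D(\Im(Y))$.
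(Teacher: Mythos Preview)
Your argument is correct. The converse direction is essentially the paper's proof, phrased directly rather than by contradiction: both observe that for each $L\in Y$ the quasi-primeness of $(L:M)$ turns $A\cap B\subseteq (L:M)$ into the cover $Y\subseteq D(AM)\cup D(BM)$, and then invoke irreducibility.

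The forward direction, however, is genuinely different. The paper verifies irreducibility of $Y$ from the definition: given closed sets $D(N),D(L)$ with $Y\subseteq D(N)\cup D(L)=D(N\cap L)$, it uses Lemma~\ref{lemtop}(4) to get $(N:M)\cap(L:M)\subseteq(\Im(Y):M)$, applies quasi-primeness of $(\Im(Y):M)$ to force, say, $(N:M)\subseteq(\Im(Y):M)$, and concludes $Y\subseteq D(N)$ via Lemma~\ref{lemtop}(4) again. Your route instead leverages results already in place: Corollary~\ref{cor4} gives that $D(\Im(Y))$ is irreducible, Proposition~\ref{Cl}(1) identifies this with $Cl(Y)$, and the general fact that a subspace is irreducible iff its closure is. This is shorter and avoids repeating the mechanics of Lemma~\ref{lemtop}; the paper's version is more self-contained and makes the role of the quasi-prime condition on $(\Im(Y):M)$ explicit in both directions. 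Either approach is perfectly valid.
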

\begin{proof}
Let $\Im(Y)$ be a quasi-prime submodule of $M$. Let $Y\subseteq Y_1\cup Y_2$ where $Y_1$ and $Y_2$ are two closed subsets of $X$. Then there are submodules $N$ and $L$ of $M$ such that $Y_1=D(N)$ and $Y_2=D(L)$. Hence, $Y\subseteq D(N)\cup D(L)=D(N\cap L)$. By Lemma~\ref{lemtop}, $((N\cap L):M)\subseteq (\Im(Y):M)$. Since $(\Im(Y):M)$ is a quasi-prime ideal, either $(N:M)\subseteq (\Im(Y):M)$ or $(L:M)\subseteq (\Im(Y):M)$. By Lemma~\ref{lemtop}, either $Y\subseteq D(N)=Y_1$ or $Y\subseteq D(L)=Y_2$. This yields  $Y$ is irreducible.

Assume that $Y$ is an irreducible space. Let $I$ and $J$ be two ideals of $R$ such that $I\cap J \subseteq (\Im(Y):M)$. Suppose for contradiction that $I\not \subseteq (\Im(Y):M)$ and $J\not\subseteq (\Im(Y):M)$. Then $(IM:M)\not \subseteq (\Im(Y):M)$ and $(JM:M)\not \subseteq (\Im(Y):M)$. By Lemma~\ref{lemtop}, $Y\not \subseteq D(IM)$, $Y\not \subseteq D(JM)$. Let $P\in Y$. Then $(P:M)\supseteq (\Im(Y):M)\supseteq I\cap J$. This means that either $IM\subseteq (P:M)M$ or $JM\subseteq (P:M)M$. So, by Lemma~\ref{lemtop}, either $D(P)\subseteq D(IM)$ or $D(P)\subseteq D(JM)$. Therefore, $Y\subseteq D(IM)\cup D(JM)$ which is a contradiction to irreducibility of $Y$.
\end{proof}

\begin{eg}
Consider $M=\mathbb{Z}/p\mathbb{Z}\oplus \mathbb{Z}$ as a $\mathbb{Z}$-module, where $p$ is a prime integer. It is easy to see that $L=\mathbb{Z}/p\mathbb{Z}\oplus (0)$ and $N=(\bar{0})\oplus p\mathbb{Z}$ are prime submodules of $M$. We have $\Im(q\Spec(M))\subseteq L\cap N=(\textbf{0})$. Hence, $(\Im(q\Spec(M)):M)=((0):M)=(0)$ is a quasi-prime ideal of $\mathbb{Z}$. This implies that $\Im(q\Spec(M))$ is a quasi-prime submodule of $M$. By Theorem \ref{irrsub}, $q\Spec(M)$ is an irreducible space.
\end{eg}

\begin{cor}
Let $M$ be an $R$-module and $N\leq M$.
\begin{enumerate}
    \item $V^M(N)$ is irreducible if and only if $\rad(N)$ is a quasi-prime submodule.\label{p3}
    \item If $N$ is a $\p$-primary submodule of $M$ where $\p\in \Max(R)$, then $V^M(N)$ is irreducible.
    \item Let $R$ be a quasi-local ring. Then $\Max(M)$ is irreducible.
    \item The quasi-prime spectrum of every faithful reduced module over an integral domain is irreducible.
\end{enumerate}
\end{cor}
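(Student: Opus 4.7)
The plan is to reduce all four parts to Theorem~\ref{irrsub}, which asserts that a subset $Y \subseteq q\Spec(M)$ is irreducible if and only if $\Im(Y)$ is a quasi-prime submodule of $M$.

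For part~(1), I would simply observe that $V^M(N) \subseteq \Spec(M) \subseteq q\Spec(M)$ and that $\Im(V^M(N)) = \rad(N)$ by the very definition of the radical; the equivalence then follows immediately from Theorem~\ref{irrsub}.

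For part~(2), I would invoke~(1) and verify that $\rad(N)$ is quasi-prime under the given hypotheses. Every $P \in V^M(N)$ has $(P:M)$ prime and containing $(N:M)$, hence containing $\sqrt{(N:M)} = \p$; maximality of $\p$ then forces $(P:M) = \p$. Consequently $(\rad(N):M) = \bigcap_{P \in V^M(N)}(P:M) = \p$ is prime, hence quasi-prime, so $\rad(N)$ is a quasi-prime submodule.

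For parts~(3) and~(4) I would apply Theorem~\ref{irrsub} directly, taking $Y = \Max(M)$ and $Y = q\Spec(M)$ respectively. In~(3), each $L \in \Max(M)$ makes $M/L$ simple, so $(L:M) = \Ann_R(M/L)$ is a maximal ideal and must coincide with the unique maximal $\m$ by quasi-locality; hence $(\Rad(M):M) = \bigcap_L(L:M) = \m$ is prime and $\Rad(M) = \Im(\Max(M))$ is quasi-prime. In~(4), reducedness gives $\Im(q\Spec(M)) \subseteq \Im(\Spec(M)) = (\textbf{0})$, so $\Im(q\Spec(M)) = (\textbf{0})$; faithfulness then makes $((\textbf{0}):M) = \Ann(M) = (0)$, which is prime because $R$ is a domain, hence quasi-prime.

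The only delicate bookkeeping concerns non-emptiness, since the empty space is excluded from irreducibility. In~(4), faithfulness forces $M \neq (\textbf{0})$ and reducedness then forces $\Spec(M)$, hence $q\Spec(M)$, to be nonempty; in~(2) and~(3) one tacitly assumes $V^M(N)$ and $\Max(M)$ are nonempty. I do not foresee any obstacle beyond these non-emptiness checks.
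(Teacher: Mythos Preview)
Your proposal is correct and follows the paper's approach exactly: all four parts are reduced to Theorem~\ref{irrsub} via the identifications $\Im(V^M(N)) = \rad(N)$, $\Im(\Max(M)) = \Rad(M)$, and $\Im(q\Spec(M)) = (\textbf{0})$. The only cosmetic difference is in part~(2), where the paper cites \cite[Corollary~5.7]{lu03} for the fact that $\rad(N)$ is quasi-prime, while you supply the direct (and correct) verification that $(\rad(N):M) = \bigcap_{P\in V^M(N)}(P:M) = \p$.
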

\begin{proof}
(1) Since $\rad(N)=\Im(V^M(N))$, result follows immediately from Theorem~\ref{irrsub}. (2) Use part~(\ref{p3}) and \cite[Corollary 5.7]{lu03}. (3) Let $\m$ be the unique maximal ideal of $R$. By \cite[p.63, Proposition 4]{lu84}, $(H:M)=\m$ for each $H\in \Max(M)$. By Lemma~\ref{lemaux}(2), $\bigcap_{H\in \Max(M)} H=\Im(\Max(M))$ is a quasi-prime submodule.  By Theorem \ref{irrsub}, $\Max(M)$  is irreducible. (4) Since $M$ is reduced, $(\Im(q\Spec(M)):M)\subseteq(\Im(\Spec(M)):M)=(\bigcap_{P\in \Spec(M)} P:M)=((\textbf{0}):M)=(0)\in \Spec(R)$. The result follows from  Theorem \ref{irrsub}.
\end{proof}

\begin{eg}\label{egs}
\begin{enumerate}
  \item Let $M=\mathbb{Z} \oplus \mathbb{Z}(p^{\infty})$ be a $\mathbb{Z}$-module. Then by Theorem~\ref{irrsub}, $\Spec(M)$ is an irreducible space because $\Im(\Spec(M))=(0) \oplus \mathbb{Z}(p^{\infty})$ is a prime submodule of $M$.
  \item Let $M=\mathbb{Q} \oplus \mathbb{Z}/p\mathbb{Z}$ be a $\mathbb{Z}$-module. By Theorem \ref{irrsub}, $\Max(M)$ is an irreducible subset of $ q\Spec(M)$ because $\Rad(M)=\mathbb{Q} \oplus (0)$.
\end{enumerate}
\end{eg}

\begin{cor}\label{zero}
Let $M$ be an $R$-module such that $(\textbf{0})\in q\Spec(M)$. Then $q\Spec(M)$ is an irreducible space. In particular, if $R$ is an integral domain and $M$ is a torsion-free $R$-module, then $q\Spec(M)$ is an irreducible space. Moreover, $q\Spec(R)$ is an irreducible space, if $R$ is an integral domain.
\end{cor}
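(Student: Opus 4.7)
The plan is to extract everything from Theorem~\ref{irrsub}. Setting $Y = q\Spec(M)$, irreducibility of $q\Spec(M)$ is equivalent to $\Im(q\Spec(M))$ being a quasi-prime submodule of $M$, so the whole corollary reduces to verifying that hypothesis in the three stated situations.

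For the first assertion, assume $(\textbf{0})\in q\Spec(M)$. Since $(\textbf{0})$ itself appears in the intersection defining $\Im(q\Spec(M))$, we have $\Im(q\Spec(M)) = (\textbf{0})$, which is quasi-prime by hypothesis. Theorem~\ref{irrsub} then immediately gives that $q\Spec(M)$ is irreducible (note $q\Spec(M)$ is nonempty since it contains $(\textbf{0})$, so the notion of irreducibility applies).

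For the ``in particular'' clause, assume $R$ is a domain and $M$ is torsion-free. It suffices to reduce to the first assertion, i.e.\ to show $(\textbf{0})\in q\Spec(M)$. Because $M$ is torsion-free over the domain $R$, any $r\in R$ with $rM = 0$ forces $r = 0$ (using any nonzero $m\in M$), so $((\textbf{0}):_R M) = \Ann(M) = (0)$. Since $(0)$ is prime in the domain $R$, it is quasi-prime by Remark~\ref{6}(3), and hence $(\textbf{0})$ is a quasi-prime submodule of $M$. The first part then applies.

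For the last clause, one applies the ``in particular'' statement with $M = R$ viewed as an $R$-module; $R$ is clearly torsion-free over itself as a domain, so $q\Spec(R)$ is irreducible. Alternatively, one checks directly that $(0)\in q\Spec(R)$ since $(0)$ is prime, and then invokes the first assertion. There is no real obstacle in this proof; the only thing to be careful about is the torsion-free $\Rightarrow \Ann(M)=(0)$ step, which requires $M$ to be nonzero (if $M=(\textbf{0})$ the statement is vacuous under the convention that $q\Spec((\textbf{0}))$ is empty, but the hypothesis $(\textbf{0})\in q\Spec(M)$ in the main assertion already forces $q\Spec(M)\ne\emptyset$, so irreducibility is meaningful).
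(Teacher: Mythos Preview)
Your argument is correct and follows the same route as the paper: both reduce everything to Theorem~\ref{irrsub} by checking that $\Im(q\Spec(M))=(\textbf{0})$ is quasi-prime. The only cosmetic difference is that for the torsion-free clause the paper invokes \cite[Lemma~4.5]{lu03} (which yields that $(\textbf{0})$ is a $(0)$-prime submodule), whereas you verify directly that $\Ann(M)=(0)$ is a prime, hence quasi-prime, ideal; your version is slightly more self-contained but the underlying idea is identical.
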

\begin{proof}
Use Theorem~\ref{irrsub} and \cite[Lemma~4.5]{lu03}.
\end{proof}

\begin{eg}
Consider the faithful $\mathbb{Z}$-module $M=\bigoplus_{p} \mathbb{Z}/{p\mathbb{Z}}$, where $p$ runs through the set of all prime integers. Then by Corollary \ref{zero}, $\Spec(M)$ is an irreducible space.
\end{eg}

Let $Y$ be a closed subset of a topological space. An element $y\in Y$ is called a \emph{generic point} of $Y$ if $Y=Cl(\{y\})$. In Proposition \ref{Cl2} (1), we have seen that every element $L$ of $ q\Spec(M)$ is a generic point of the irreducible closed subset $D(L)$ of $ q\Spec(M)$. Note that a generic point of a closed subset $Y$ of a topological space is unique if the topological space is a $T_{0}$-space.

\begin{thm}\label{irclsub}\label{cores}\label{irrmin}
Let $M$ be an $R$-module and $Y \subseteq q\Spec(M)$.
\begin{enumerate}
  \item  Then $Y$ is an irreducible closed subset of $ q\Spec(M)$ if and only if $Y = D^M(L)$ for some $L \in  q\Spec(M)$. Thus every irreducible closed subset of $ q\Spec(M)$ has a generic point.
  \item If $M$ is quasi-prime-embedding, then the correspondence $D^M(L)\mapsto L$ is a bijection of the set of irreducible components of $ q\Spec(M)$ onto the set of minimal elements of $ q\Spec(M)$ with respect to inclusion.
  \item Let $M$ be a quasi-primeful $R$-module. Then the set of all irreducible components of $ q\Spec(M)$ is of the form $$T=\{ D^M(IM) \mid \text{$I$ is a minimal element of $D^R(\Ann(M))$ w.r.t inclusion} \}.$$
  \item Let $R$ be an arithmetical Laskerian ring and $M$ be a nonzero quasi-primeful $R$-module. Then $q\Spec(M)$ has finitely many irreducible components.
\end{enumerate}
\end{thm}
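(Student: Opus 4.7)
For part~(1), the forward implication is Corollary~\ref{cor4}. For the converse, when $Y\subseteq q\Spec(M)$ is irreducible and closed, I would combine Proposition~\ref{Cl}(1) (yielding $Y = Cl(Y) = D^M(\Im(Y))$) with Theorem~\ref{irrsub} (which, by irreducibility of $Y$, gives that $\Im(Y)$ is quasi-prime) and set $L := \Im(Y)$; this $L$ is then automatically a generic point of $Y$.

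For part~(2), the quasi-prime-embedding hypothesis makes $L\mapsto D^M(L)$ injective on $q\Spec(M)$ by Proposition~\ref{t0inj}. The equivalence $D^M(L_1)\subseteq D^M(L_2)\Leftrightarrow (L_2:M)\subseteq (L_1:M)$ holds in general, and injectivity upgrades this to $L_2\subseteq L_1$, so the correspondence is order-reversing. Combined with part~(1), which supplies surjectivity of $L\mapsto D^M(L)$ onto the family of irreducible closed subsets, this yields the desired bijection between irreducible components (maximal irreducibles) and minimal elements of $q\Spec(M)$.

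For part~(3), I would combine~(1) with the identity $D^M(L) = D^M((L:M)M)$ of Lemma~\ref{lemtop}(3), so that $D^M(L)$ depends only on $(L:M)$. Quasi-primefulness identifies $\{(L:M):L\in q\Spec(M)\}$ with $D^R(\Ann(M))$, and the order-reversing correspondence then matches irreducible components with minimal elements of $D^R(\Ann(M))$; thus the components are exactly $\{D^M(IM) : I \text{ minimal in } D^R(\Ann(M))\}$. Distinctness of these sets for distinct minimal $I, I'$ follows from $(IM:M) = I$, which is secured by choosing $L$ with $(L:M)=I$ (possible by quasi-primefulness) and sandwiching $I \subseteq (IM:M) \subseteq (L:M) = I$.

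For part~(4), by~(3) it suffices to prove that $D^R(\Ann(M))$ has finitely many minimal elements. Given a minimal $I\in D^R(\Ann(M))$, the Laskerian hypothesis forces $I$ to be primary, so $\q:=\sqrt I$ is a prime containing $\Ann(M)$; by Lemma~\ref{lemaux}(8) (applied under the arithmetical and Laskerian hypotheses on $R$), $\q$ is then a minimal prime of $\Ann(M)$ and $S_\q(\Ann(M))$ is a $\q$-primary, hence quasi-prime, ideal. Using $I = I_\q\cap R$ (since $I$ is $\q$-primary), the inclusion $\Ann(M)_\q\subseteq I_\q$ contracts to $S_\q(\Ann(M))\subseteq I$, and minimality of $I$ upgrades this to $I = S_\q(\Ann(M))$. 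Because Laskerianness ensures $\Ann(M)$ admits a primary decomposition with only finitely many minimal primes, the assignment $I\mapsto \sqrt I$ embeds the minimal elements of $D^R(\Ann(M))$ into a finite set, giving finitely many irreducible components of $q\Spec(M)$. The main obstacle is the step $I = S_\q(\Ann(M))$; once that is established, the remainder is straightforward assembly of results already proved in the excerpt.
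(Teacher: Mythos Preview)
Your proof is correct. Parts (1)--(3) follow the paper's approach closely. The one step that deserves an extra sentence is in part~(2), where you write ``injectivity upgrades this to $L_2\subseteq L_1$'': the underlying reason is that under quasi-prime-embedding, Lemma~\ref{lemaux}(1) together with Proposition~\ref{t0inj}(3) force $L=(L:M)M$ for every $L\in q\Spec(M)$, whence $(L_2:M)\subseteq(L_1:M)$ gives $L_2=(L_2:M)M\subseteq(L_1:M)M=L_1$. The paper makes essentially the same move but only locally, comparing $L$ with $(N:M)M$ in the specific situation $D(L)\subseteq D(N)$ rather than asserting a global order-anti-isomorphism.

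Your part~(4) takes a genuinely different route from the paper. You identify each minimal $I\in D^R(\Ann(M))$ with the saturation $S_{\sqrt I}(\Ann(M))$ via Lemma~\ref{lemaux}(8), thereby injecting the minimal elements of $D^R(\Ann(M))$ into the finite set of minimal primes over $\Ann(M)$. The paper's argument is more direct: in an arithmetical Laskerian ring the quasi-prime ideals coincide with the primary ideals (Remark~\ref{6}(2),(7),(8)), so one takes a primary decomposition $\Ann(M)=\bigcap_{i=1}^n Q_i$; since a minimal $I\in D^R(\Ann(M))$ is quasi-prime and contains $\bigcap_i Q_i$, it contains some $Q_j$, and as $Q_j$ is itself quasi-prime and contains $\Ann(M)$, minimality forces $I=Q_j$. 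Your saturation argument works and yields the additional information that the minimal quasi-primes over $\Ann(M)$ are exactly the saturations $S_\q(\Ann(M))$ at the minimal primes $\q$, but the paper's route is shorter and avoids localization entirely.
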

\begin{proof}
\begin{enumerate}
  \item It is clear $Y=D(L)$ is an irreducible closed subset of $ q\Spec(M)$ for any $L\in  q\Spec(M)$ by Corollary~\ref{cor4}. Conversely, if $Y$ is an irreducible closed subset of $ q\Spec(M)$, then $Y=D(N)$ for some $ N\leq M$ and $L:=\Im (Y)=\Im(D(N))\in  q\Spec(M)$ by Theorem~\ref{irrsub}. Hence, $Y=D(N)=D(\Im(D(N)))=D(L)$ as desired.

  \item Let $Y$ be an irreducible component of $ q\Spec(M)$. Since each irreducible component of $ q\Spec(M)$ is a maximal element of the set $\{D(N) \mid N\in  q\Spec(M)\}$ by (1), we have $Y=D(L)$ for some $L\in  q\Spec(M)$. Obviously, $L$ is a minimal element of $ q\Spec(M)$, for if $T\in q\Spec(M)$ with $T\subseteq L$, then $D(L)\subseteq D(T)$. So $L=T$ due to the maximality of $D(L)$ and Proposition~\ref{t0inj}. Let $L$ be a minimal element of $ q\Spec(M)$ with $D(L)\subseteq D(N)$ for some $N\in  q\Spec(M)$. Then $L\in D(N)$ whence $(N:M)M\subseteq L$. By Lemma~\ref{lemaux}, $(N:M)M$ belongs to $q\Spec(M)$. Hence, $L=(N:M)M$ due to the minimality of $L$. By Lemma~\ref{lemtop}, $D(N)=D((N:M)M)=D(L)$. This implies that $D(L)$ is an irreducible component of $ q\Spec(M)$, as desired.

  \item Let $Y$ be an irreducible component of $ q\Spec(M)$. By part~(1), $Y=D^M(L)$ for some $L\in  q\Spec(M)$. Hence, $Y=D^M(L)=D^M((L:M)M)$ by Lemma~\ref{lemtop}. So, we have $l:=(L:M)\in D^R(\Ann(M))$. We must show that $l$ is a minimal element of $D^R(\Ann(M))$ w.r.t inclusion. To see this let $q\in D^R(\Ann(M))$ and $q\subseteq l$. Then $q/\Ann(M)\in  q\Spec(R/\Ann(M))$, and there exists an element $Q\in q\Spec(M)$ such that $(Q:M)=q$ because $M$ is quasi-primeful. So, $Y=D^M(L)\subseteq D^M(Q)$. Hence, $Y=D^M(L)=D^M(Q)$ due to the maximality of $D^M(L)$. By Proposition~\ref{Cl2}, we have that $l=q$. Conversely, let $Y\in T$. Then there exists a minimal element $I$ in $D^R(\Ann(M))$ such that $Y=D^M(IM)$. Since $M$ is quasi-primeful, there exists an element $N\in  q\Spec(M)$ such that $(N:M)=I$. So, $Y=D^M(IM)=D^M((N:M)M)=D^M(N)$, and so $Y$ is irreducible by part~(1). Suppose that $Y=D^M(N)\subseteq D^M(Q)$, where $Q$ is an element of $ q\Spec(M)$. Since $N\in D^M(Q)$ and $I$ is minimal, it follows that $(N:M)=(Q:M)$. Now, by Lemma~\ref{lemtop}, $$Y=D^M(N)=D^M((N:M)M)=D^M((Q:M)M)=D^M(Q).$$

  \item By assumption, the set of quasi-prime ideals are exactly the set of primary ideals (see Remark \ref{6}). If $I$ is a minimal element of $D^R(\Ann(M))$ and $\Ann(M) = \cap_{i=1}^n Q_i$ is a minimal primary decomposition of $\Ann(M)$, then  $Q_i \subseteq I$  for some $1\leq i \leq n$(Since $I$ is quasi-prime and $\cap_{i=1}^n Q_i\subseteq I$). By minimality of $I$, we get $I = Q_i$. Therefore, irreducible components of $q\Spec(M)$ are the form $D^M(Q_iM)$, by part (3).
\end{enumerate}
\end{proof}

We introduce a base for the developed Zariski topology on $q\Spec(M)$ for any $R$-module $M$. For each $a\in R$, we define $\Gamma_M(a)= q\Spec(M)-D(aM)$. Then every $\Gamma_M(a)$ is an open set of $ q\Spec(M)$, $\Gamma_M(0) =\emptyset$, and $\Gamma_M(1)=q\Spec(M)$.

\begin{prop}\label{base}
For any $R$-module $M$, the set $B =\{ \Gamma_M(a)\mid a\in R\}$ forms a base
for the developed Zariski topology on $ q\Spec(M)$.
\end{prop}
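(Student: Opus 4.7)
The plan is to verify directly that every open set in the developed Zariski topology on $q\Spec(M)$ can be written as a union of sets of the form $\Gamma_M(a)$. Since each $aM$ is a submodule of $M$, each $D(aM)$ is closed by definition of the topology, so each $\Gamma_M(a) = q\Spec(M) - D(aM)$ is indeed open; this shows $B \subseteq \tau$, and only the covering property remains.

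The first step is to rewrite $\Gamma_M(a)$ in a form suitable for comparison with an arbitrary open set. Using Lemma~\ref{lemtop}(3), we have $D(aM) = \{L \in q\Spec(M) \mid (aM : M) \subseteq (L : M)\}$, and since $a \in (aM : M)$, membership of $L$ in $D(aM)$ forces $a \in (L : M)$; conversely, if $a \in (L : M)$ then $aM \subseteq L$, so $(aM : M) \subseteq (L : M)$. Hence
\[
\Gamma_M(a) = \{L \in q\Spec(M) \mid a \notin (L : M)\}.
\]

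The second step handles an arbitrary open set. Every open set has the form $U = q\Spec(M) - D(N)$ for some submodule $N$ of $M$. For $L \in q\Spec(M)$, the condition $L \notin D(N)$ amounts to $(N : M) \not\subseteq (L : M)$, that is, the existence of some $a \in (N : M)$ with $a \notin (L : M)$, equivalently $L \in \Gamma_M(a)$. Conversely, for any $a \in (N : M)$, if $L \in \Gamma_M(a)$ then $a$ witnesses $(N : M) \not\subseteq (L : M)$, so $L \in U$. Therefore
\[
U \;=\; \bigcup_{a \in (N : M)} \Gamma_M(a),
\]
exhibiting $U$ as a union of members of $B$, which completes the proof.

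There is no real obstacle here; the only conceptual point is the identification of $\Gamma_M(a)$ via the identity $(aM : M)M = aM$, which is immediate from $a \in (aM : M)$, and then the covering argument is a one-line unfolding of the definition of $D(N)$.
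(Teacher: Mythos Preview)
Your proof is correct and follows essentially the same approach as the paper: both show that an arbitrary open set $U = q\Spec(M) - D(N)$ decomposes as $\bigcup_{a \in (N:M)} \Gamma_M(a)$. The paper obtains $D(N) = \bigcap_{a \in (N:M)} D(aM)$ via the identity $\bigcap_i D(N_i) = D(\sum_i (N_i:M)M)$ established just before the proposition, while you verify the same decomposition by a direct elementwise argument using the characterization $\Gamma_M(a) = \{L : a \notin (L:M)\}$; the difference is purely presentational.
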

\begin{proof}
We may assume that $ q\Spec(M)\neq\emptyset$. Let $U$ be any open subset in $ q\Spec(M)$. There exists a submodule $N$ of $M$ such that
\begin{eqnarray*}
  U &=& q\Spec(M)-D(N)=q\Spec(M)-D((N:M)M) \\
   &=& q\Spec(M)-D(\sum_{a_i\in(N:M)}a_iM) \\
   &=& q\Spec(M)-D(\sum_{a_i\in(N:M)}(a_iM:M)M) \\
   &=& q\Spec(M)-\bigcap_{a_i\in(N:M)} D(a_iM) \\
   &=& \bigcup_{a_i\in(N:M)} \Gamma_M(a_i).
\end{eqnarray*}
\end{proof}

\begin{prop}\label{XD}
Let $M$ be an $R$-module, $a\in R$ and $\psi :  q\Spec(M)\rightarrow q\Spec(R/\Ann(M))$ be the natural map of $ q\Spec(M)$.
\begin{enumerate}
  \item $\psi^{-1}(\Gamma_{\bar{R}}(\bar{a}))=\Gamma_M(a)$;
  \item $\psi(\Gamma_M(a))\subseteq \Gamma_{\bar{R}}(\bar{a})$. If $M$ is quasi-primeful, then $\psi(\Gamma_M(a))= \Gamma_{\bar{R}}(\bar{a})$;
  \item If $M$ is quasi-primeful, then $q\Spec(M)$ is a compact space.
  \item If $M$ is finitely generated multiplication, then $q\Spec(M)$ is compact.
\end{enumerate}
\end{prop}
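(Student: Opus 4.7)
The plan is to establish parts (1)--(4) in order, exploiting the identity $\psi(L)=(L:M)/\Ann(M)$ together with the basis $\{\Gamma_M(a)\}_{a\in R}$ of Proposition~\ref{base}. For~(1), I unwind definitions: $L\in \psi^{-1}(\Gamma_{\bar R}(\bar a))$ iff $\bar a\notin (L:M)/\Ann(M)$ iff $a\notin (L:M)$ iff $aM\not\subseteq L$, i.e.\ $L\notin D^M(aM)$, which is $L\in \Gamma_M(a)$. The forward reading of this same chain yields the inclusion $\psi(\Gamma_M(a))\subseteq \Gamma_{\bar R}(\bar a)$ in part~(2); and when $M$ is quasi-primeful, surjectivity of $\psi$ lets me read the chain backwards, producing, for any $\bar J\in \Gamma_{\bar R}(\bar a)$, a preimage $L\in q\Spec(M)$ with $(L:M)=J$ and hence $a\notin (L:M)$, so $L\in \Gamma_M(a)$.

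The heart of the argument is part~(3), which rests on the auxiliary observation that $q\Spec(\bar R)$ itself is compact. Given a basic cover $\{\Gamma_{\bar R}(\bar a_i)\}_i$ of $q\Spec(\bar R)$, one has $\bigcap_i D^{\bar R}(\bar a_i \bar R)=\emptyset$; if $\sum_i \bar a_i \bar R\neq \bar R$ the sum would lie in some maximal and hence quasi-prime ideal, contradicting coverage, so $1=\sum_{j=1}^n \bar r_j\bar a_{i_j}$ for finitely many indices and those indices already cover $q\Spec(\bar R)$. To transfer this back to $M$: given $q\Spec(M)=\bigcup_i \Gamma_M(a_i)$, the surjectivity of $\psi$ combined with part~(2) yields $q\Spec(\bar R)=\bigcup_i \Gamma_{\bar R}(\bar a_i)$; extracting a finite subcover and pulling it back through part~(1) produces the required finite subcover of $q\Spec(M)$.

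Part~(4) reduces to~(3) once I show that every finitely generated multiplication module is quasi-primeful. Given $\bar I\in q\Spec(\bar R)$ with representative $I\supseteq \Ann(M)$, set $N:=IM$. For a finitely generated multiplication module the standard identities $(IM:M)=I+\Ann(M)$ and $IM=M\Leftrightarrow I+\Ann(M)=R$ imply, using $\Ann(M)\subseteq I\neq R$, that $N$ is a proper submodule with $(N:M)=I$ quasi-prime, so $N\in q\Spec_I(M)$; hence $\psi$ is surjective and part~(3) delivers compactness. The principal obstacle is the compactness-transfer in~(3); once parts~(1) and~(2) are in hand it reduces to the ring-level compactness of $q\Spec(\bar R)$, which can be verified directly by the $1=\sum \bar r_j\bar a_{i_j}$ argument sketched above.
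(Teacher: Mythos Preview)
Your proof is correct. Parts (1)--(3) follow essentially the same line as the paper: the paper appeals to Proposition~\ref{cont} for~(1) and cites \cite[Theorem~4.1]{az08} for the compactness of $q\Spec(\bar R)$ in~(3), whereas you unwind both directly, but the logical structure is identical.

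Part~(4) is where you genuinely diverge. The paper argues directly with the finite intersection property: given closed sets $D^M(N_\lambda)$ with empty intersection, one has $D^M\bigl(\sum_\lambda (N_\lambda:M)M\bigr)=\emptyset$; since $M$ is multiplication, Lemma~\ref{lemtop}(3) gives $\Omega^M\bigl(\sum_\lambda (N_\lambda:M)M\bigr)=\emptyset$, so $\sum_\lambda(N_\lambda:M)M=M$, and finite generation then extracts a finite subfamily. You instead show that a finitely generated multiplication module is quasi-primeful (via the identities $(IM:M)=I+\Ann(M)$ and $IM=M\Leftrightarrow I+\Ann(M)=R$, which hold because $M$ is faithful, finitely generated, and multiplication over $\bar R$), and then invoke~(3). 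Your route is slightly more conceptual and yields the extra fact that such modules are quasi-primeful; the paper's route is more self-contained, needing only the definitions rather than the cancellation-type identities for multiplication modules.
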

\begin{proof}
\begin{enumerate}
  \item By Proposition \ref{cont}, we have
\begin{eqnarray*}
  \psi^{-1}(\Gamma_{\bar{R}}(\bar{a})) &=& \psi^{-1}(q\Spec(\bar{R})-D(\bar{a}\bar{R})) \\
    &=& q\Spec(M)- \psi^{-1}(D(\bar{a}\bar{R})) \\
    &=& q\Spec(M)-D(aM)=\Gamma_M(a).
\end{eqnarray*}
  \item This follows from (1).
  \item By Proposition~\ref{base}, the set $B =\{ \Gamma_M(a)\mid a\in R\}$ is a base for the developed Zariski topology on $q\Spec(M)$. For any open cover of $q\Spec(M)$, there is a family $\{a_{\lambda}\in R | \lambda \in \Lambda\}$ of elements of $R$ such that $q\Spec(M)=\bigcup_{\lambda\in\Lambda} \Gamma_M(a_\lambda)$ and for each $\lambda \in \Lambda$, there is an open set in the covering containing  $\Gamma_M(a_\lambda)$. By part (2),
      \begin{eqnarray*}
        q\Spec(\bar{R}) &=& \Gamma_{\bar{R}}(\bar{1}) \\
         &=& \psi(\Gamma_M(1)) \\
         &=& \psi(q\Spec(M))\subseteq \bigcup_{\lambda\in\Lambda} \psi(\Gamma_M(a_\lambda))\\
         &=& \bigcup_{\lambda\in\Lambda} \Gamma_{\bar{R}}(\bar{a}_\lambda).
      \end{eqnarray*}
      By \cite[Theorem 4.1]{az08}, $q\Spec(\bar{R})$ is compact, hence there exists a finite subset $\Lambda '$ of $\Lambda$ such that $q\Spec(\bar{R})\subseteq \bigcup_{\lambda\in\Lambda '} \Gamma_{\bar{R}}(\bar{a}_\lambda)$. By part (1),
      \begin{eqnarray*}
  q\Spec(M) &=& \Gamma_M(1) \\
    &=& \psi^{-1}(\Gamma_{\bar{R}}(\bar{1}))\\
    &=& \psi^{-1}(q\Spec(\bar{R}))\subseteq \bigcup_{\lambda\in\Lambda '} \psi^{-1}(\Gamma_{\bar{R}}(\bar{a}_\lambda))\\
    &=&\bigcup_{\lambda\in\Lambda '} \Gamma_M(a_\lambda).
\end{eqnarray*}
  \item Let $\{D^M(N_{\lambda})\}_{\lambda \in \Lambda}$ be an arbitrary family of closed subsets of $ q\Spec(M)$, where $N_{\lambda}\leq M$ for each $\lambda \in \Lambda$ such that $\bigcap_{\lambda \in \Lambda} D^M(N_{\lambda})=\emptyset$. Hence, we have $D^M(\sum_{\lambda\in \Lambda}(N_{\lambda} : M)M)=\emptyset$. Since $M$ is multiplication, $\Omega^M(\sum_{\lambda\in \Lambda}(N_{\lambda} : M)M)=\emptyset$, so $M=\sum_{\lambda\in \Lambda}(N_{\lambda} : M)M$. Since $M$ is finitely generated, there exists a finite subset $\Lambda '$ of $\Lambda$ such that $M=\sum_{\lambda\in \Lambda'}(N_{\lambda} : M)M$. This completes the proof.
\end{enumerate}
\end{proof}

A topological space $X$ is said to be \emph{Noetherian} if the open subsets of $X$ satisfy the ascending chain condition. Since closed subsets are complements of open subsets, it comes to the same thing to say that the closed subsets of $X$ satisfy the descending chain condition.

\begin{thm}\label{thm2.1}\label{accring}
Let $M$ ba an $R$-module.
\begin{enumerate}
  \item If $M$ satisfies $ACC$ on quasi-semiprime submodules, then $ q\Spec(M)$ is a Noetherian topological space. In particular, quasi-prime spectrum of every Noetherian module is a Noetherian topological space (see \cite[Theorem~4.2]{az08}).
  \item If for every submodule $N$ of $M$ there exists a finitely generated submodule $L$ of $N$ such that $\Im(\Omega^M(N))=\Im(\Omega^M(L))$, then $ q\Spec(M)$ is a Noetherian topological space.
  \item If $R$ satisfies $ACC$ on quasi-semiprime ideals, then $ q\Spec(M)$ is a Noetherian topological space. In particular, for every module $M$ over a Noetherian ring, $ q\Spec(M)$ is a Noetherian topological space.
\end{enumerate}
\end{thm}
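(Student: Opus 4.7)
My plan is to reduce every case to the descending chain condition on closed subsets of $q\Spec(M)$, exploiting the identity $Cl(Y) = D(\Im(Y))$ from Proposition~\ref{Cl2}(1). The core observation is that since every closed set $Y = D(N)$ satisfies $D(N) = D(\Im(D(N)))$, the map $Y \mapsto \Im(Y)$ is an order-reversing \emph{injection} from closed subsets of $q\Spec(M)$ into submodules of $M$. Thus a strictly descending chain of closed sets lifts to a strictly ascending chain of submodules of a prescribed form, and it suffices to show the relevant ACC kills such chains.

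For part (1), I would observe that for any closed set $Y = D(N)$, the submodule $\Im(Y)$ is literally an intersection of quasi-prime submodules, hence quasi-semiprime. A descending chain $D(N_1)\supseteq D(N_2)\supseteq\cdots$ therefore produces an ascending chain $\Im(D(N_1))\subseteq\Im(D(N_2))\subseteq\cdots$ of quasi-semiprime submodules, which stabilises by hypothesis, and the injection argument forces the original chain to stabilise. The ``in particular'' for Noetherian modules is immediate since ACC on all submodules implies ACC on quasi-semiprime submodules.

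For part (3), I would instead translate the chain into an ascending chain of \emph{ideals}. Given $D(N_1)\supseteq D(N_2)\supseteq\cdots$, set $I_i = (\Im(D(N_i)):_R M) = \bigcap_{P\in D(N_i)}(P:M)$. Since each $(P:M)$ is quasi-prime in $R$, the ideal $I_i$ is quasi-semiprime in $R$, and $I_1\subseteq I_2\subseteq\cdots$. By ACC on quasi-semiprime ideals of $R$, this stabilises at some index $n$. Because $D(N)$ depends only on $(N:M)$, equality $(K_n:M)=(K_{n+1}:M)$ already gives $D(K_n)=D(K_{n+1})$, hence $D(N_n)=D(N_{n+1})=\cdots$. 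The ``in particular'' for modules over a Noetherian ring is immediate.

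Part (2) is the subtle one and where I expect the main obstacle: the hypothesis involves $\Omega^M$, not $D$, so one must bridge between the two. Given a chain $D(N_1)\supseteq D(N_2)\supseteq\cdots$, set $I_i=(N_i:_R M)$ (an ascending chain of ideals) and $J_i = I_i M$. Using Lemma~\ref{lemtop}(3) together with the identity $(IM:M)M = IM$, one sees that $D(N_i) = D(J_i) = \Omega^M(J_i)$. Let $J = \bigcup_i J_i = \bigl(\bigcup_i I_i\bigr)M$, which is again of the form ``ideal times $M$''. Applying the hypothesis to $J$, one obtains a finitely generated submodule $L\subseteq J$ with $\Im(\Omega^M(J)) = \Im(\Omega^M(L))$. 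Finite generation forces $L\subseteq J_n$ for some $n$, and then for $k\ge n$ the sandwich
\[
\Omega^M(L) \supseteq \Omega^M(J_n) \supseteq \Omega^M(J_k) \supseteq \Omega^M(J)
\]
collapses under $\Im$ to a chain of equalities. Applying Proposition~\ref{Cl2}(1) to the closed sets $\Omega^M(J_k) = D(J_k)$ and $\Omega^M(J) = D(J)$ then yields $D(N_k) = D(J_k) = D(J)$ for all $k\ge n$, so the chain stabilises. The delicate point is precisely making sure the finitely generated submodule $L$ lands inside some $J_n$ and then recognising that $D$ and $\Omega^M$ coincide on the submodules $J_i$ of the form $IM$, since this equivalence is what lets us invoke the injectivity of $D(\cdot) \leftrightarrow \Im(D(\cdot))$.
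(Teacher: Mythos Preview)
Your arguments for parts (1) and (3) match the paper's proof essentially line for line: form the ascending chain $\Im(D(N_i))$ (resp.\ $(\Im(D(N_i)):M)$), invoke the relevant ACC, and recover $D(N_k)=D(N_{k+i})$ via $D(\Im(D(N)))=Cl(D(N))=D(N)$.

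For part (2) you take a genuinely different route. The paper does not work directly with a descending chain of closed sets; instead it shows that the hypothesis of (2) forces the hypothesis of (1). Concretely, given an ascending chain $N_1\subseteq N_2\subseteq\cdots$ of quasi-semiprime submodules with union $N$, the paper picks a finitely generated $L\subseteq N$ with $\Im(\Omega^M(N))=\Im(\Omega^M(L))$, finds $n$ with $L\subseteq N_n$, and then uses the key fact that a quasi-semiprime submodule $K$ satisfies $\Im(\Omega^M(K))=K$ to squeeze
\[
\Im(\Omega^M(N))=\Im(\Omega^M(L))\subseteq N_n\subseteq N\subseteq \Im(\Omega^M(N)),
\]
forcing $N_n=N_{n+1}=\cdots$. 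Part (1) then finishes. Your approach instead replaces the $N_i$ by $J_i=(N_i:M)M$, exploits the identity $(IM:M)M=IM$ to identify $D(J_i)=\Omega^M(J_i)$, and runs the finitely-generated-$L$ argument on $J=\bigcup J_i$ directly at the level of closed sets. Both are correct; the paper's reduction is shorter and exhibits (2) as a strengthening of (1), while your argument is self-contained but needs the extra bookkeeping about when $D$ and $\Omega^M$ agree.
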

\begin{proof}
\begin{enumerate}
  \item Let $D(N_{1})\supseteq D(N_{2})\supseteq \cdots $ be a descending chain of closed subsets of $ q\Spec(M)$. We have an ascending chain of quasi-semiprime submodules of $M$, $\Im(D(N_{1}))\subseteq \Im(D(N_{2}))\subseteq \cdots $ which is stationary by assumption.
 So, there exists a positive integer $k$ such that $\Im(D(N_{k}))= \Im(D(N_{k+i}))$, for each $i=1,2, \ldots$ \,. By Proposition \ref{Cl}, $D(N_{k})= D(N_{k+i})$, and so $ q\Spec(M)$ is a Noetherian topological space.
 \item Let $N_1\subseteq N_2\subseteq N_3\subseteq \cdots$ be an ascending chain of
quasi-semiprime submodules of $M$, and let $N=\cup_iN_i$. By assumption, there
exists a finitely generated submodule $L$ of $N$ such that $\bigcap_{P\in \Omega^M(N)}P =\bigcap_{Q\in \Omega^M(L)}Q$.
Hence there exists a positive integer $n$ such that $L\subseteq N_n$. Then
$$\bigcap_{P\in \Omega^M(N)}P =\bigcap_{Q\in \Omega^M(L)}Q\subseteq N_n \subseteq N \subseteq \bigcap_{P\in \Omega^M(N)}P,$$ so that $N_n = N_{n+1} = N_{n+2} =\cdots$. Hence, $M$ satisfies $ACC$ on quasi-semiprime submodules. By (1), $q\Spec(M)$ is a Noetherian topological space.
 \item Let $D(N_{1})\supseteq D(N_{2})\supseteq \cdots $ be a descending chain of closed subsets of $ q\Spec(M)$. By assumption, there exists a positive integer $k$ such that $(\Im(D(N_{k})):M)M= (\Im(D(N_{k+i})):M)M$, for each $i=1,2, \ldots$ \,. By Lemma~\ref{lemtop}, $D(\Im(D(N_{k})))= D(\Im(D(N_{k+i})))$. By Proposition~\ref{Cl}, $D(N_{k})= D(N_{k+i})$, and so $ q\Spec(M)$ is a Noetherian space.
\end{enumerate}
\end{proof}

\begin{rem}\label{remnoecom}
Let $X$ be a Noetherian topological space. Then every subspace of $X$ is compact. In particular, $X$ is compact (see \cite[p. 79, Ex. 5]{ati69}).
\end{rem}

As a consequence of Remark \ref{remnoecom}, we have

\begin{cor}\label{cor2.5}
For an $R$-module $M$, $ q\Spec(M)$ is a compact space in each of the following cases.
\begin{enumerate}
  \item $M$ satisfies $ACC$ on quasi-semiprime submodules;
  \item $R$ satisfies $ACC$ on quasi-semiprime ideals.
\end{enumerate}
\end{cor}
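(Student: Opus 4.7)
The plan is to derive this corollary as an immediate combination of Theorem~\ref{thm2.1} and Remark~\ref{remnoecom}, with no real additional work required. The only ``content'' to deliver is the routing: each hypothesis feeds the correct branch of Theorem~\ref{thm2.1}, which produces a Noetherian topology on $q\Spec(M)$, and then Remark~\ref{remnoecom} converts Noetherian to compact.

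More concretely, for case~(1), I would invoke Theorem~\ref{thm2.1}(1) directly: the ascending chain condition on quasi-semiprime submodules of $M$ is exactly the hypothesis of that part, and it yields that $q\Spec(M)$ is a Noetherian topological space. For case~(2), I would apply Theorem~\ref{thm2.1}(3) in the same way: ACC on quasi-semiprime ideals of $R$ gives that $q\Spec(M)$ is Noetherian. In both cases, the final step is to quote Remark~\ref{remnoecom}, which states that any Noetherian topological space (and in fact any subspace thereof) is compact; applied to $q\Spec(M)$ itself this finishes the argument.

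There is no real obstacle to this proof; it is purely a packaging step. The substantive work was already done in Theorem~\ref{thm2.1}, where one had to check that the relevant chain conditions on submodules or ideals force the descending chain condition on closed subsets of $q\Spec(M)$ via the $\Im\circ D$ correspondence together with Proposition~\ref{Cl}(1). Consequently, the proof I would write is essentially one sentence per case, referencing Theorem~\ref{thm2.1}(1) or Theorem~\ref{thm2.1}(3) respectively, and then Remark~\ref{remnoecom} to pass from Noetherianity to compactness.
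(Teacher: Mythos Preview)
Your proposal is correct and matches the paper's approach exactly: the paper places this corollary immediately after Remark~\ref{remnoecom} with the lead-in ``As a consequence of Remark~\ref{remnoecom}, we have,'' relying implicitly on Theorem~\ref{thm2.1}(1) and (3) to supply the Noetherian hypothesis in each case.
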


For example, quasi-prime spectrum of every $\mathbb{Z}$-module is compact space.

\begin{prop}\label{fin}\label{finimini}
Let $M$ be a quasi-prime-embedding\, $R$-module. If $ q\Spec(M)$ is a Noetherian  space, then
\begin{enumerate}
  \item Every ascending chain of quasi-prime submodules of $M$ is stationary;
  \item $ q\Spec(M)$ has finitely many minimal element. In particular, every multiplication module over a Noetherian ring has finitely many minimal quasi-prime submodules.
\end{enumerate}
\end{prop}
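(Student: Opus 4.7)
The plan is to translate ascending chains of quasi-prime submodules into descending chains of closed sets in $q\Spec(M)$ and to exploit the bijection between irreducible components and minimal quasi-prime submodules furnished by Theorem~\ref{irclsub}(2).

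For part~(1), given an ascending chain $N_1\subseteq N_2\subseteq\cdots$ of quasi-prime submodules, the ideals $(N_i:M)$ form an ascending chain in $R$, so by Lemma~\ref{lemtop} the associated closed sets $D(N_1)\supseteq D(N_2)\supseteq\cdots$ form a descending chain in $q\Spec(M)$. The Noetherianness of $q\Spec(M)$ produces an index $k$ with $D(N_k)=D(N_{k+i})$ for all $i\ge 0$. Since each $N_j$ is a quasi-prime submodule and $M$ is quasi-prime-embedding, Proposition~\ref{t0inj} (the equivalence $(2)\Leftrightarrow(1)$) forces $N_k=N_{k+i}$, giving the desired stabilization.

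For part~(2), I would invoke the standard fact that a Noetherian topological space has only finitely many irreducible components (every closed subset of a Noetherian space admits a finite irredundant decomposition into irreducible closed subsets). Applying this to $q\Spec(M)$ and then using Theorem~\ref{irclsub}(2), which under the quasi-prime-embedding hypothesis furnishes a bijection $D^M(L)\mapsto L$ between the irreducible components of $q\Spec(M)$ and the minimal elements of $q\Spec(M)$, one concludes that $q\Spec(M)$ has only finitely many minimal elements. For the ``in particular'' clause, if $M$ is a multiplication module over a Noetherian ring $R$, then $M$ is quasi-prime-embedding by Corollary~\ref{maincor}, and $q\Spec(M)$ is Noetherian by Theorem~\ref{accring}(3), so the statement just proved applies.

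The argument is essentially a bookkeeping exercise in the dictionary between algebra and topology, so there is no real obstacle; the only point requiring a little care is the correct direction of the inclusions (noting that $D(N_i)$ shrinks as $(N_i:M)$ grows) and the invocation of the quasi-prime-embedding hypothesis precisely at the step where one deduces equality of submodules from equality of their associated closed sets.
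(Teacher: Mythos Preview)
Your proposal is correct and follows essentially the same argument as the paper: both parts convert the algebraic statement into the topological one (descending chains of $D(N_i)$ for part~(1), finiteness of irreducible components plus Theorem~\ref{irclsub}(2) for part~(2)) and then invoke Proposition~\ref{t0inj}, Corollary~\ref{maincor}, and Theorem~\ref{accring} exactly as you do. The only difference is cosmetic: you spell out the use of Lemma~\ref{lemtop} and the specific part~(3) of Theorem~\ref{accring}, which the paper leaves implicit.
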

\begin{proof}
(1) Let $N_1\subseteq N_2 \subseteq \cdots$ be an ascending chain of quasi-prime submodules of $M$. Then $D(N_1)\supseteq D(N_2) \supseteq \cdots$ is a descending chain of closed subsets of $ q\Spec(M)$, which is stationary by assumption. There exists an integer $k\in \mathbb{N}$ such that $D(N_k)=D(N_{k+i})$ for each $i\in \mathbb{N}$. By Proposition~\ref{t0inj}, we have $N_k=N_{k+i}$ for each $i\in \mathbb{N}$. This completes the proof.
(2) Since every Noetherian topological space has finitely many irreducible components, the result follows from Theorem \ref{cores}(2). For last statement, use Corollary~\ref{maincor} and Theorem~\ref{accring}.
\end{proof}

We recall that if $X$ is a finite space, then $X$ is a $T_1$ if and only if $X$
is the discrete space. We also recall that a topological space is called  Hausdorff  if any two distinct points possess disjoint neighborhoods. So, we have the following corollary.

\begin{cor}
Let $R$ be a Noetherian ring and $M$ be a finitely generated $R$-module. Then the following statements are equivalent:
\begin{enumerate}
  \item $q\Spec(M)$ is a Hausdorff space;
  \item $q\Spec(M)$ is a $T_1$-space;
  \item $q\Spec(M)$ is a discrete space;
  \item $M$ is a multiplication module and $q\Spec(M)=\Max(M)$.
\end{enumerate}
\end{cor}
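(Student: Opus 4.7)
The plan is to close the cycle $(1)\Rightarrow(2)\Rightarrow(3)\Rightarrow(1)$ and separately invoke the already-established equivalence $(2)\Leftrightarrow(4)$. Two of the arrows are general topology: every Hausdorff space is $T_1$, which gives $(1)\Rightarrow(2)$, and every discrete space is Hausdorff, which gives $(3)\Rightarrow(1)$. The equivalence $(2)\Leftrightarrow(4)$ is exactly Theorem~\ref{t1multi}, since $M$ is finitely generated. So the only real work is in the implication $(2)\Rightarrow(3)$.

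For $(2)\Rightarrow(3)$, the key point is to upgrade the $T_1$ property to finiteness, after which discreteness is automatic (a finite $T_1$ space is discrete). Since $R$ is Noetherian and $M$ is finitely generated, $M$ is a Noetherian $R$-module. Applied to this situation, the earlier corollary stating that a Noetherian $R$-module whose quasi-prime spectrum is $T_1$ is Artinian cyclic tells us that $M$ has finite length. In particular $\Max(M)$ is a finite set, and by the equivalence $(2)\Leftrightarrow(4)$ we have $q\Spec(M)=\Max(M)$, so $q\Spec(M)$ itself is finite. A finite $T_1$ space is discrete, which yields~(3).

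The main obstacle in the argument is producing the finiteness of $q\Spec(M)$: once the space is known to be finite and $T_1$, the discreteness step is trivial, and all the other implications are either tautological or a direct appeal to Theorem~\ref{t1multi}. The finiteness is not immediate from Noetherianness of the spectrum (which by Theorem~\ref{accring} only yields finitely many minimal quasi-primes via Proposition~\ref{fin}); it genuinely uses the stronger Artinian-cyclic conclusion from the preceding corollary, which in turn relies on $M$ being finitely generated and multiplication with $q\Spec(M)=\Max(M)$. Thus the one nontrivial ingredient is the length-finiteness conclusion already proved earlier in this section.
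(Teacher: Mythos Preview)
Your proof is correct and follows the same overall skeleton as the paper: the cycle $(1)\Rightarrow(2)\Rightarrow(3)\Rightarrow(1)$ with the trivial arrows handled by general topology, and $(2)\Leftrightarrow(4)$ by Theorem~\ref{t1multi}.

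The one place where you diverge from the paper is in the finiteness step inside $(2)\Rightarrow(3)$. You invoke the preceding corollary to conclude that $M$ is Artinian cyclic, hence has only finitely many maximal submodules, and then use $q\Spec(M)=\Max(M)$. The paper instead argues directly from Proposition~\ref{finimini}: since $R$ is Noetherian, $q\Spec(M)$ is a Noetherian space (Theorem~\ref{accring}); since $q\Spec(M)$ is $T_0$, $M$ is quasi-prime-embedding (Proposition~\ref{Cl}(6)); so there are only finitely many \emph{minimal} quasi-prime submodules. But by $(4)$ every quasi-prime submodule is maximal, hence no two are comparable, so every element of $q\Spec(M)$ is already minimal and $q\Spec(M)$ is finite. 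Your closing remark that ``finiteness is not immediate from Noetherianness of the spectrum'' and that Proposition~\ref{fin} ``only yields finitely many minimal quasi-primes'' therefore undersells that route: once you know $q\Spec(M)=\Max(M)$, finitely many minimal quasi-primes \emph{is} finitely many quasi-primes, and this is exactly how the paper proceeds. Your Artinian-cyclic detour is a perfectly valid alternative, but it is heavier (it imports \cite[Theorem~4.9]{beh06} via the preceding corollary) where the paper's argument stays internal to the topology of $q\Spec(M)$.
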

\begin{proof}
$(1)\Rightarrow(2)$ and $(3)\Rightarrow(1)$ are clear. $(2)\Leftrightarrow(4)$ follows form Theorem~\ref{t1multi}. $(2)\Rightarrow(3)$ By Proposition~\ref{finimini}, $M$ has finitely many minimal quasi-prime submodules. By Theorem~\ref{t1multi}, $q\Spec(M)$ is finite. Therefore, $q\Spec(M)$ is a discrete space.
\end{proof}

\section*{ACKNOWLEDGMENTS}
The authors  specially thank the referee for the useful suggestions and
comments.

\providecommand{\bysame}{\leavevmode\hbox to3em{\hrulefill}\thinspace}
\providecommand{\MR}{\relax\ifhmode\unskip\space\fi MR }
\providecommand{\MRhref}[2]{%
  \href{http://www.ams.org/mathscinet-getitem?mr=#1}{#2}
}
\providecommand{\href}[2]{#2}

\end{document}